\documentclass[10pt]{article}
\usepackage[a4paper,left=25mm,right=25mm,top=25mm,bottom=25mm]{geometry}

\usepackage[utf8]{inputenc}

\RequirePackage[numbers]{natbib}

\usepackage{amsthm,amsmath,amsfonts,amssymb}
\usepackage{graphicx,psfrag,epsf}
\usepackage{enumerate}
\usepackage{url}
\usepackage{hyperref}
\hypersetup{colorlinks=true,allcolors=[rgb]{0,0,1}}
\usepackage{color,xcolor}
\usepackage{dsfont}
\usepackage{multirow}
\usepackage{array}
\usepackage{booktabs}
\usepackage{caption}
\usepackage{subcaption}
\allowdisplaybreaks
\selectfont

\theoremstyle{plain}
\newtheorem{theorem}{Theorem}[section]
\newtheorem{proposition}[theorem]{Proposition}
\newtheorem{cor}[theorem]{Corollary}
\newtheorem{lem}[theorem]{Lemma}

\theoremstyle{definition}

\newtheorem{example}[theorem]{Example}

\theoremstyle{remark}
\newtheorem{remark}[theorem]{Remark}

\newcommand{\R}{\mathbb{R}}
\newcommand{\PP}{\mathbb{P}}

\newcommand{\XX}{\mathbf{X}}

\newcommand{\xx}{\mathbf{x}}

\newcommand{\Var}{\mathrm{Var}}

\usepackage{bbm}

\usepackage{tikz}
\usepackage{pgfplots}

\usepackage{wrapfig}
\usetikzlibrary{positioning,arrows}
\usetikzlibrary{arrows.meta,decorations.markings}
\usetikzlibrary{matrix}
\usepackage{hologo}

\usepackage{xcolor}
\usetikzlibrary{matrix,fit}
\tikzset{
  mleftdelimiter/.style={
    inner ysep=0pt,
    inner xsep=1ex,
    left delimiter=\{,
    label={[label distance=3mm]left:#1}
  }
}

\definecolor{light-gray}{gray}{0.95}
\definecolor{darkblue}{rgb}{0,0,.5}
\definecolor{foxred}{rgb}{0.7, 0.11, 0.11}

\begin{document}

\title{Dependence functions based on Chatterjee's rank correlation}

\author{
Carsten Limbach\\
Paris Lodron Universität Salzburg\\
Department for Artificial Intelligence and Human Interface\\
Hellbrunner Straße 34, 5020 Salzburg, Austria\\
\texttt{carsten.limbach@plus.ac.at}
}

\date{13.05.2026}

\maketitle

\begin{abstract}
{We investigate a geometric and distributional reinterpretation of Chatterjee’s
$\xi$-coefficient, a measure of functional dependence between a response variable
$Y$ and a predictor vector $\XX$. To this end, we analyze the Markov
product $(Y, Y')$, where $Y'$ denotes a copy of $Y$ that is
conditionally independent of $Y$ given $\XX$, and we define and study two dependence functions, $\phi_{(Y,\XX)}$ and $\kappa_{(Y,\XX)}$.
This approach provides geometric intuition for the structure of the Markov
product and extends Chatterjee’s correlation coefficient to a richer and more interpretable object for studying directed stochastic dependence.
In particular, beyond asking how well \(Y\) can be described as a function of \(\XX\),
the proposed functions allow one to assess the concentration of the Markov product
near the diagonal.
}
\end{abstract}
\noindent\textbf{Keywords:}
conditional distributions; directed dependence; Markov product; perfect dependence.

\medskip

\noindent\textbf{MSC 2020:}
62H20; 62H05; 62G05; 62G20.
\section{Introduction}

Dependence measures play a central role in quantifying different types of relationships between random variables. While Pearson’s correlation measures linear dependence and measures of concordance such as Kendall’s $\tau$ and Spearman’s $\rho$ capture monotone dependence, measures such as Chatterjee’s correlation coefficient $\xi$ are designed to detect functional dependence.

Chatterjee’s dependence coefficient has stimulated a growing body of work on its theoretical properties, extensions, and variants. Recent contributions study, for instance, continuity and monotonicity properties of $\xi$ \cite{ansari2025continuity,ansari2026ordering,deb2020chatterjee,tran2024rank}, properties of its estimator \cite{dalitz2024bias,lin2023boosting,shi2021azadkia}, and extensions to multivariate or multi-response settings \cite{ansari2022extension,huang2023multivariate}.

Chatterjee’s dependence coefficient admits the representation introduced in \cite{azadkia2021simple}:
\begin{align}
\xi(Y,\mathbf{X})
:=
\frac{\int_{\mathbb{R}} \Var\bigl(\PP(Y \ge y \mid \mathbf{X})\bigr)\,
\mathrm{d}\PP^{Y}(y)}
{\int_{\mathbb{R}} \Var\bigl(\mathbf{1}_{\{Y \ge y\}}\bigr)\,
\mathrm{d}\PP^{Y}(y)}.
\end{align}
The coefficient takes values in $[0,1]$ \cite{azadkia2021simple}; it vanishes if and only if $Y$ and $\XX$ are independent, and it equals one if and only if $Y$ perfectly depends  on $\XX$, that is, if $Y=f(\XX)$ almost surely for some measurable function $f$.

Throughout the paper, \(F_Y\) is assumed to be continuous. Moreover, \((Y,Y')\) denotes the conditional i.i.d. Markov product given \(\XX\); that is, conditionally on \(\XX\), the random variables \(Y\) and \(Y'\) are independent and identically distributed. Chatterjee’s rank correlation then admits the following representation; see \cite{ansari2025continuity} and the calculation in Theorem~\ref{Intro}:
\begin{equation}
\xi(Y,\XX)
=
6 \int_{\mathbb{R}} \mathbb{P}(Y \ge y, Y' \ge y)\,\mathrm{d}\PP^{Y}(y) - 2.
\end{equation}
It is the basis for the nearest-neighbor rank estimator via the Markov-product approximation of $\xi(Y,\XX)$.

The usefulness of this reformulation is reflected in the fact that $\xi(Y,\XX)=1$ holds if and only if $Y$ and $Y'$ are comonotone, or, equivalently, if $Y=Y'$ almost surely, while $\xi(Y,\XX)=0$ is equivalent to the independence of $Y$ and $Y'$, which in turn is equivalent to the independence of $\XX$ and $Y$, see \cite{fuchs2026dimension}.

\begin{theorem}\label{Intro}
Assume that the distribution function $F_Y$ is continuous.  
Then the following representations of $\xi(Y,\XX)$ are equivalent:
\begin{equation}\label{intor2s}
\xi(Y,\XX)
=
6 \int_{\mathbb{R}} \mathbb{P}(Y \ge y, Y' \ge y)\,\mathrm{d}\PP^{Y}(y) - 2
= 3\int_0^1 \PP\bigl(|F_Y(Y)-F_Y(Y')|\le s\bigr)\,ds - 2.
\end{equation}
\end{theorem}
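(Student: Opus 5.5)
The plan is to pass everything to the probability integral transforms $U:=F_Y(Y)$ and $V:=F_Y(Y')$, which are both uniform on $[0,1]$ because $F_Y$ is continuous and $Y'\stackrel{d}{=}Y$. Both sides of \eqref{intor2s} will then reduce to the single quantity $1-3\,\mathbb{E}|U-V|$.

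\emph{Step 1 (from the Azadkia--Chatterjee form to the first expression).}
\begin{itemize}
\item[(a)] Since $F_Y$ is continuous, $\PP(Y\ge y)=1-F_Y(y)$. Hence the denominator equals $\int F_Y(1-F_Y)\,\mathrm{d}\PP^Y=\int_0^1 t(1-t)\,dt=1/6$.
\item[(b)] In the numerator, write
\[
\Var\bigl(\PP(Y\ge y\mid\XX)\bigr)=\mathbb{E}\bigl[\PP(Y\ge y\mid \XX)^2\bigr]-(1-F_Y(y))^2 .
\]
By the defining property of the Markov product (conditional independence of $Y,Y'$ given $\XX$ with identical conditional laws), $\mathbb{E}[\PP(Y\ge y\mid\XX)^2]=\PP(Y\ge y,Y'\ge y)$.
\item[(c)] Integrating against $\PP^Y$ and using $\int(1-F_Y)^2\,\mathrm{d}\PP^Y=\int_0^1(1-t)^2\,dt=1/3$ gives
\[
\xi(Y,\XX)=6\Bigl(\int\PP(Y\ge y,Y'\ge y)\,\mathrm{d}\PP^Y(y)-\tfrac13\Bigr),
\]
which is the first representation.
\end{itemize}

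\emph{Step 2 (the first expression in terms of $\min(U,V)$).} Introduce $Y''\sim \PP^Y$ independent of $(Y,Y')$. By Fubini,
\[
\int\PP(Y\ge y,Y'\ge y)\,\mathrm{d}\PP^Y(y)=\PP\bigl(Y''\le \min(Y,Y')\bigr)=\mathbb{E}\bigl[F_Y(\min(Y,Y'))\bigr]=\mathbb{E}[\min(U,V)].
\]
The middle equality uses that $F_Y$ has no atoms. The last equality uses that $F_Y$ is nondecreasing, so $F_Y(\min(Y,Y'))=\min(U,V)$.

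\emph{Step 3 (the second expression).} By Fubini and Tonelli,
\[
\int_0^1\PP(|U-V|\le s)\,ds=\mathbb{E}\bigl[1-|U-V|\bigr],
\]
because $|U-V|\in[0,1]$. Using $\min(U,V)=\tfrac12(U+V-|U-V|)$ and $\mathbb{E}U=\mathbb{E}V=1/2$:
\begin{itemize}
\item the first expression becomes $6\mathbb{E}\min(U,V)-2=1-3\mathbb{E}|U-V|$;
\item the second becomes $3(1-\mathbb{E}|U-V|)-2=1-3\mathbb{E}|U-V|$.
\end{itemize}
This proves the equality of the two expressions.

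The main point needing care is Step 1(b): the identity $\mathbb{E}[\PP(Y\ge y\mid\XX)^2]=\PP(Y\ge y,Y'\ge y)$, which rests on conditioning on $\XX$ and invoking conditional i.i.d.-ness. A secondary point is the handling of atoms and plateaus of $F_Y$ in Step 2. Continuity of $F_Y$ ensures both that $\PP(Y''\le t)=F_Y(t)$ without left-limit corrections and that $U,V$ are exactly uniform.
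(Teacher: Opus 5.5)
Your proof is correct, and its overall structure matches the paper's. Writing your $U,V$ for the paper's $V,V'$, both arguments reduce the first expression to $\mathbb{E}[\min(U,V)]$, rewrite this as $\tfrac12(1-\mathbb{E}|U-V|)$, and conclude with $\int_0^1\PP(|U-V|\le s)\,ds=\mathbb{E}[1-|U-V|]$. The differences are in how the individual steps are carried out.

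\begin{itemize}
\item The paper does not prove the first equality; it imports it from Ansari--Fuchs. Your Step~1 derives it from the Azadkia--Chatterjee ratio, using $\mathbb{E}[\PP(Y\ge y\mid\XX)^2]=\PP(Y\ge y,Y'\ge y)$ and the uniform moments $1/6$ and $1/3$. This makes the theorem self-contained relative to the definition of $\xi$ that the paper states.
\item To reach $\mathbb{E}[\min(U,V)]$, the paper uses inclusion--exclusion to pass to $\int\PP(Y<y,Y'<y)\,\mathrm{d}\PP^Y(y)$. It then substitutes $t=F_Y(y)$, which tacitly uses that $\{Y<y\}$ and $\{F_Y(Y)<F_Y(y)\}$ differ only by a null set, i.e.\ that plateaus of $F_Y$ carry no mass. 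Finally it rewrites $\mathbb{E}[1-\max(U,V)]$. Your independent copy $Y''$, combined with the pointwise identity $F_Y(\min(Y,Y'))=\min(F_Y(Y),F_Y(Y'))$ for nondecreasing $F_Y$, gets there in one line and avoids the null-set argument entirely.
\item Your identity $\min(U,V)=\tfrac12(U+V-|U-V|)$ needs only $\mathbb{E}U=\mathbb{E}V$. The paper instead writes $\min=V-(V-V')_+$ and appeals to exchangeability of the Markov product.
\end{itemize}

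One correction to your commentary: the middle equality in Step~2 does not use continuity. $\PP(Y''\le m)=F_Y(m)$ holds for any distribution function, and your event uses the non-strict inequality $Y''\le\min(Y,Y')$. Continuity is genuinely needed only in two places: for $\PP(Y\ge y)=1-F_Y(y)$ in Step~1, and for the exact uniformity of $U$ and $V$, which gives the constants $1/6$, $1/3$ and $1/2$.
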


The representation in Theorem~\ref{Intro} expresses $\xi$ in
terms of probability integral transforms and measures dependence through
the expected absolute difference between transformed observations. This representation shows that $\xi$ depends on the distribution of
\(
\lvert F_Y(Y) - F_Y(Y') \rvert
\)
under the Markov product.
It can be expressed in terms of the following two functions:
\begin{align}
\phi_{(Y,\mathbf{X})}(t)
&:= \PP\bigl(|F_Y(Y)-F_Y(Y')|\le t\bigr), \label{Phi1}\\
\kappa_{(Y,\mathbf{X})}(t)
&:= 1 - 3\int_0^t (1-\phi_{(Y,\mathbf{X})}(s))\,\mathrm{d}s. \label{Phi2}
\end{align}
We note that $\phi_{(Y,\XX)}(1)=1$, $\kappa_{(Y,\XX)}(0)=1$, and $\kappa_{(Y,\XX)}(1)=\xi(Y,\XX)$.
\\

The function $\phi_{(Y,\mathbf{X})}$ corresponds to the integrand appearing in the representation of Chatterjee’s correlation coefficient in (\ref{intor2s}). In contrast, $\kappa_{(Y,\mathbf{X})}$ arises when the integration is restricted to the interval $[0,t]$ instead of the full range $[0,1]$. This localized version captures additional structural information about the distribution of $|F_Y(Y)-F_Y(Y')|$ and thus provides a more refined view on the dependence between $Y$ and $\mathbf{X}$.

In this work, we investigate the limitations of Chatterjee’s rank correlation in terms of its restricted explanatory power and explain why $\phi_{(Y,\XX)}(0)$ constitutes a helpful quantity for understanding dependence structures. In particular, Chatterjee’s correlation coefficient is intended to quantify how well dependence can be characterized in a functional manner. However, the examples shown in Figure~\ref{fig:drei_bilder23} raise the question of how effectively this measure captures such structure: all three cases attain the same value $\xi(Y,\XX)=0.25$, despite exhibiting substantial differences in their underlying functional forms. Instead, Figure \ref{fig:zwei_bilder5} depicts the functions $\phi_{(Y,X)}$ and $\kappa$ for each of the dependence structures shown in Figure \ref{fig:drei_bilder23}, thereby revealing significant differences.

\begin{figure}[ht]
\centering
\includegraphics[width=0.85\textwidth]{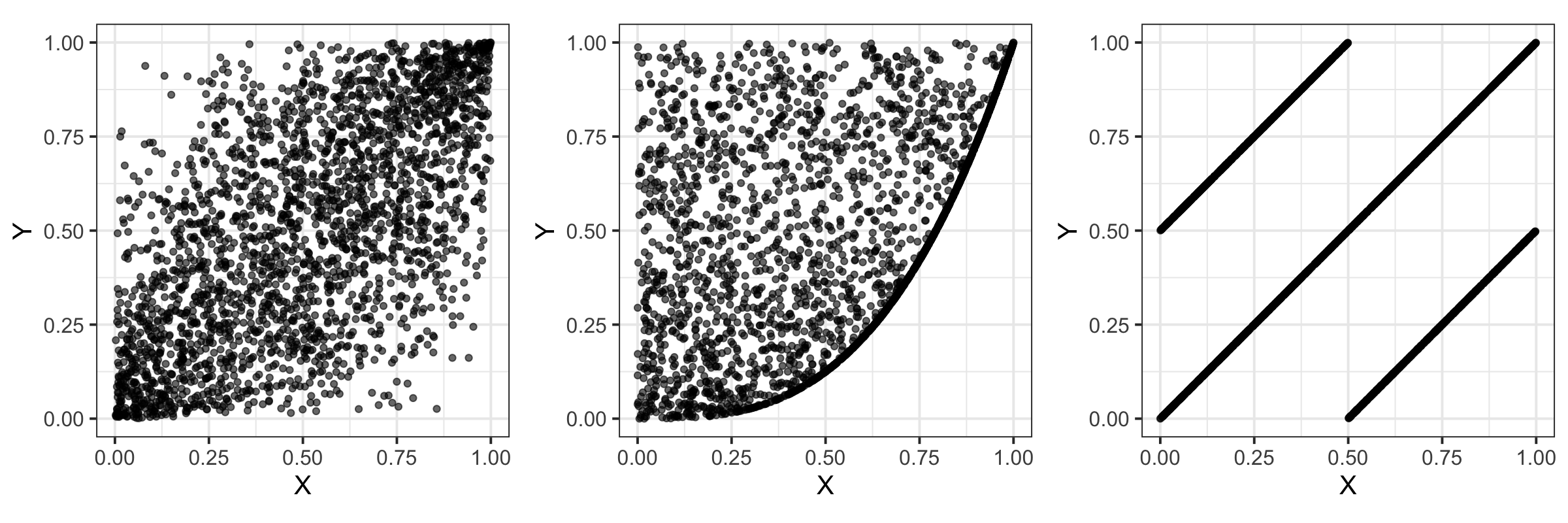}
\caption{From left to right: the random vector $(Y,X)$ associated with a Gaussian copula (\ref{F_Yaus}) with parameter $\sqrt{\sqrt{2}-1}$, a Marshall--Olkin copula (\ref{Olkin}) with parameters $(1,\,1/3)$, and a Jump copula with parameter $1$ (see Example~\ref{Jump}), based on a sample size of $n = 2500$.}
\label{fig:drei_bilder23}
\end{figure}

\begin{figure}[ht]
\centering
\includegraphics[width=0.67\textwidth]{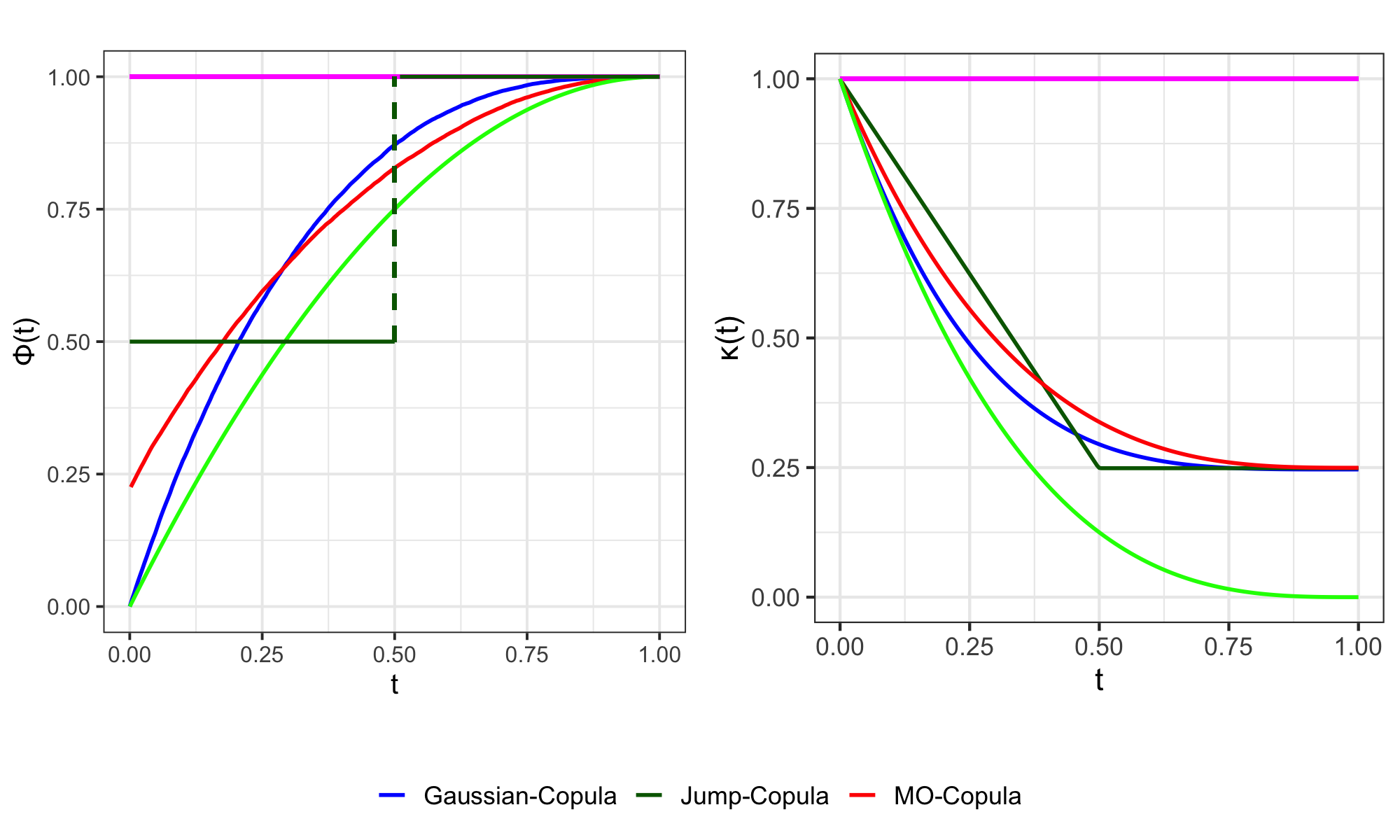}
\caption{The dependence functions $\phi_{(Y,X)}$ and $\kappa$ for a random vector $(Y,X)$ associated with a Gaussian copula (\ref{F_Yaus}) with parameter $\rho=\sqrt{\sqrt{2}-1}$ (blue), a Marshall--Olkin copula (\ref{Olkin}) with parameters $(\alpha,\beta)=(1,\tfrac{1}{3})$ (red), and a Jump copula (\ref{Jump}) with parameter $1$ (green).}
\label{fig:zwei_bilder5}
\end{figure}

We have the following characterizations of independence and perfect dependence: $\phi_{(Y,\XX)}(t)=2t-t^2 \text{ for all } t\in[0,1] \text{ if and only if } \XX \perp Y,
$ whereas $\phi_{(Y,\XX)}(t)=1 \text{ for all } t\in[0,1] \text{ if and only if } Y=f(\XX) \text{ almost surely.}$
Similarly, by Theorem~\ref{ter2}, there exists \( t \in (0,1] \) such that
\(\kappa_{(Y,\XX)}(t)=0\) if and only if \( \XX \perp Y \), while $\kappa_{(Y,\XX)}(t)=1 \text{ for all } t\in(0,1]$ if and only if \( Y \) depends perfectly on \( \XX \).
We denote by $\phi_{(Y,\XX)}^{\perp}$ and $\kappa_{(Y,\XX)}^{\perp}$ the functions corresponding to independence of $\XX$ and $Y$, and by $\phi_{(Y,\XX)}^{pd}$ and $\kappa_{(Y,\XX)}^{pd}$ those corresponding to perfect dependence of $Y$ on $\XX$, that is,
\[
\phi_{(Y,\XX)}^{\perp}(t) = 2t - t^2, \qquad
\kappa_{(Y,\XX)}^{\perp}(t) = (1-t)^3 \ \text{ for all } t \in [0,1],
\]
and
\[
\phi_{(Y,\XX)}^{pd}(t) = 1, \qquad
\kappa_{(Y,\XX)}^{pd}(t) = 1 \ \text{ for all } t \in [0,1].
\]

Of particular interest is the interpretation of $\phi_{(Y,\XX)}$ at the point $0$. 
Note that this corresponds to the event $V=V'$, where $V=F_Y(Y)$ and $V'=F_Y(Y')$. 
While this includes the case $Y=Y'$, the quantity $\phi_{(Y,\XX)}(0)$ should not be 
interpreted as the probability mass of a deterministic relation $Y=f(\XX)$.

A more precise identity is
\[
\phi_{(Y,\XX)}(0)
=
\PP(V=V')
=
\mathbb{E}\!\left[
\sum_{a} \PP(V=a\mid \XX)^2
\right],
\]
where the sum runs over all atoms $a$ of the conditional distribution of $V$ given $\XX$.

Thus, $\phi_{(Y,\XX)}(0)$ measures the \emph{squared conditional atomic mass} of $V$ given $\XX$, rather than the probability of a purely functional component. In particular, in mixture models a deterministic component with weight $p$ contributes $p^2$, not $p$.

Consequently, $\phi_{(Y,\XX)}(0)$ quantifies the amount of atomic mass in the conditional law of $F_Y(Y)$ given $\XX$. It detects exact conditional ties in the associated Markov product and is strictly positive whenever the conditional distribution has point-mass components on a set of positive \(\PP^{\mathbf X}\)-measure.

The analysis of $\phi_{(Y,X)}$ also reveals several desirable properties, including the convexity of $\kappa_{(Y,\XX)}$.

The above approach further allows a natural \emph{geometric perspective}. The origin of this viewpoint lies in the following observation: under the Markov product transformation, perfect functional dependence of $Y$ on $\mathbf{X}$ implies $F_Y(Y) = F_Y(Y')$ almost surely, so that all mass concentrates on the diagonal of the unit square. Heuristically, stronger dependence between \(\mathbf X\) and \(Y\) can be interpreted as making the conditionally sampled pair \((Y,Y')\), given \(\mathbf X\), more concentrated near the diagonal. This informal interpretation should not be read as a general monotonicity result; the shuffle examples below illustrate that non-monotone behavior may occur. For $t \in [0,1]$, define the set
\begin{align}
A_t 
:= \{(u,v) \in [0,1]^2 : |u - v| \le t\},
\end{align}
see Figure~\ref{A} for an illustration. Then
\begin{align}
\phi_{(Y,\mathbf{X})}(t)
= \mathbb{P}^{(F_Y(Y),\,F_Y(Y'))}(A_t). \label{T12}
\end{align}
In other words, $\phi_{(Y,\mathbf{X})}(t)$ measures the probability mass of the transformed pair $(F_Y(Y), F_Y(Y'))$ contained in a band of width $2t$ around the diagonal. In this geometric interpretation, certain forms of stronger dependence may manifest themselves through a higher concentration of mass near the diagonal, which can be reflected in larger values of \(\phi_{(Y,\mathbf X)}(t)\) for small \(t\).

\begin{figure}[h]
    \centering
    \begin{tikzpicture}

        \begin{scope}[yshift=2.5cm,xshift=2.5cm]

            \begin{scope}[shift={(0,-4.3)}, scale=4.5]

                \foreach \x in {-0.02,0,0.125,...,1,1.02} {
                    \draw[gray!50, very thin] (\x,-0.02) -- (\x,1.02);  
                }
                \foreach \y in {-0.02,0,0.125,...,1,1.02} {
                    \draw[gray!50, very thin] (-0.02,\y) -- (1.02,\y);  
                }

                \fill[blue!20] (0,0.25) -- (0.75,1) -- (1,1) -- (1,0.75) -- (0.25,0) -- (0,0);

                \node[text=blue, font=\bfseries] at (0.5,0.5) {$A_{\frac{1}{4}}$};

                \draw (-0.02,-0.02) rectangle (1.02,1.02);

                \node[below left] at (0.05,-0.02) {\footnotesize $0$};
                 \node[below left] at (-0.02,0.05) {\footnotesize $0$};
                \node[below] at (1,-0.02) {\footnotesize $1$};
                \node[left] at (-0.02,1) {\footnotesize $1$};

                \node[below] at (0.25,-0.02) {\footnotesize $0.25$};
                \node[below] at (0.5,-0.02) {\footnotesize $0.5$};
                \node[below] at (0.75,-0.02) {\footnotesize $0.75$};

                \node[left] at (-0.02,0.25) {\footnotesize $0.25$};
                \node[left] at (-0.02,0.5) {\footnotesize $0.5$};
                \node[left] at (-0.02,0.75) {\footnotesize $0.75$};

            \end{scope}
        \end{scope}

    \end{tikzpicture}
    \centering
    \caption{Representation of $A_{\frac{1}{4}}$ for absolute distance.}
    \label{A}
\end{figure}

The final part of the paper is devoted to a consistent plug-in estimator for the newly introduced dependence functions $\phi_{(Y,X)}$ and $\kappa$. This estimator enables a detailed analysis of the underlying dependence structure. In particular, it provides explicit insights into the conditional distribution of $Y$ given $\XX$, including indications of conditional atomic or functional components. Moreover, we illustrate an application to the near-zero behavior of $\phi_{(Y,\XX)}$, where the estimator allows one to draw conclusions about potential conditional atomic components and structured dependence patterns.

\section{The dependence functions $\phi_{(Y,X)}$ and $\kappa$}

In the following section, we examine the dependence functions $\phi_{(Y,\XX)}$ and $\kappa_{(Y,\XX)}$ and investigate their fundamental properties under the assumption that $F_Y$ is continuous. This naturally raises the question which aspects of conditional dependence are captured by the function $\phi_{(Y,X)}$ and, in the subsequent section, by the function $\kappa$.

\subsection{Properties of \texorpdfstring{$\phi_{(Y,X)}$}{phi}}

For a random vector $(Y,\XX)$, the function $\phi_{(Y,\XX)}$ in \eqref{Phi1} is defined as the distribution function of the random variable
\[
Z = \lvert F_Y(Y) - F_Y(Y') \rvert,
\]
where $(Y,Y')$ is a version of the Markov product of $(Y,\XX)$. In particular, its distribution function provides a characterization of independence through Chatterjee’s correlation coefficient.
Thus, $\phi_{(Y,\XX)}(0)$ provides quantitative information about exact conditional ties in the Markov product, or equivalently about atomic components in the conditional distribution of $F_Y(Y)$ given $\XX$.

\begin{theorem}
Let $(Y,\XX)$ be a $(d+1)$-dimensional random vector.
\(
\phi_{(Y,\XX)} = \phi^{\perp}_{(Y,\XX)}
\)
if and only if  $Y$ and $\XX$ are independent.
\end{theorem}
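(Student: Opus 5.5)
The plan is to prove the two directions separately, working throughout with $V=F_Y(Y)$ and $V'=F_Y(Y')$. Since $F_Y$ is continuous, $V$ and $V'$ are each uniformly distributed on $[0,1]$. The representation in Theorem~\ref{Intro} links $\phi_{(Y,\XX)}$ to $\xi(Y,\XX)$, and that link is the main tool.

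\emph{Independence implies $\phi_{(Y,\XX)}=\phi^{\perp}_{(Y,\XX)}$.} If $Y\perp\XX$, then the conditional law of $Y$ given $\XX$ equals $\PP^{Y}$ almost surely. The Markov product $(Y,Y')$ is then a pair of independent copies of $Y$, so $(V,V')$ is uniform on $[0,1]^2$. The band $A_t$ has Lebesgue measure $1-(1-t)^2=2t-t^2$. By \eqref{T12} this gives $\phi_{(Y,\XX)}(t)=2t-t^2$ for every $t\in[0,1]$.

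\emph{$\phi_{(Y,\XX)}=\phi^{\perp}_{(Y,\XX)}$ implies independence.} Insert $\phi_{(Y,\XX)}(s)=2s-s^2$ into the second representation in Theorem~\ref{Intro}:
\[
\xi(Y,\XX)=3\int_0^1(2s-s^2)\,ds-2=3\Bigl(1-\tfrac13\Bigr)-2=0.
\]
The characterization recalled in the introduction, from \cite{azadkia2021simple}, says that $\xi(Y,\XX)=0$ holds if and only if $Y$ and $\XX$ are independent. This yields $Y\perp\XX$.

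The argument depends essentially on two ingredients: the validity of Theorem~\ref{Intro}, and the characterization of $\xi=0$, both of which the paper allows us to assume. If one wanted a more self-contained route, the step needing care is the reverse direction. Equality of the full distribution function of $|V-V'|$ with that of independent uniforms forces $\xi=0$ through the integral identity. From the first representation, $\xi=0$ means
\[
\int \Var\bigl(\PP(Y\ge y\mid\XX)\bigr)\,d\PP^Y(y)=0,
\]
so $\PP(Y\ge y\mid\XX)$ is almost surely constant for $\PP^Y$-almost every $y$. Continuity of $F_Y$, together with right-continuity of conditional distribution functions, extends this to all $y$, which is exactly independence.
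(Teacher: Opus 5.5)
Your proposal is correct and follows the paper's proof. For the independence direction you use that $(V,V')$ is uniform on $[0,1]^2$, so $\PP(|V-V'|\le t)=1-(1-t)^2=2t-t^2$; for the converse you substitute $\phi^{\perp}_{(Y,\XX)}$ into $\xi(Y,\XX)=3\int_0^1\phi_{(Y,\XX)}(s)\,ds-2$ (the paper writes this as $\kappa_{(Y,\XX)}(1)=\xi(Y,\XX)$) to get $\xi=0$, then invoke the Azadkia--Chatterjee characterization, so your closing self-contained sketch is optional.
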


\begin{proof}
Assume first that $\phi_{(Y,\XX)}=\phi^{\perp}_{(Y,\XX)}$. Then
\begin{align}
\kappa_{(Y,\XX)}(1)
&= 1 - 3 + 3\int_0^1 \phi_{(Y,\XX)}(s)\,\mathrm{d}s \notag\\
&= 3\int_0^1 \phi^{\perp}_{(Y,\XX)}(s)\,\mathrm{d}s -2 \notag\\
&= 3\int_0^1 (2s-s^2)\,\mathrm{d}s -2
=0. \notag
\end{align}
Since $\kappa_{(Y,\XX)}(1)=\xi(Y,\XX)$, independence follows from the characterization of $\xi$; see \cite{azadkia2021simple}.

Conversely, if $Y$ and $\XX$ are independent, then $V=F_Y(Y)$ and $V'=F_Y(Y')$ are independent uniform random variables on $[0,1]$ under the Markov product. Hence, for all $t\in[0,1]$,
\[
\PP(|V-V'|\le t)
=
1-\PP(|V-V'|>t)
=
1-(1-t)^2
=
2t-t^2.
\]
Therefore $\phi_{(Y,\XX)}=\phi^{\perp}_{(Y,\XX)}$.
\end{proof}
As noted in the introduction, $\phi_{(Y,\XX)}(0)$ reflects atomic mass in the conditional distribution of $F_Y(Y)$ given $\XX$.
\begin{theorem}
Let $(Y,\XX)$ be a $(d+1)$-dimensional random vector. The following statements are equivalent:
\begin{itemize}
\item[(i)] For $\mathbb{P}^{\XX}$-almost every $\xx\in\mathbb{R}^d$, the conditional distribution of $F_Y(Y)$ given $\XX=\xx$ has no atoms, i.e.,
\[
\mathbb{P}\bigl(F_Y(Y)=v \mid \XX=\xx\bigr)=0
\quad \text{for every } v\in[0,1].
\]
\item[(ii)] $\phi_{(Y,\XX)}(0)=0$.
\end{itemize}
\end{theorem}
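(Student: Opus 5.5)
The plan is to write $\phi_{(Y,\XX)}(0)=\PP(V=V')$ as an integral over $\XX$ of conditional tie probabilities, and then use the elementary fact that two i.i.d. random variables coincide with positive probability if and only if their common law has an atom. Throughout, set $V=F_Y(Y)$ and $V'=F_Y(Y')$, where $(Y,Y')$ is the Markov product, and let $\mu_{\xx}$ denote a regular conditional distribution of $V$ given $\XX=\xx$. Such a version exists because $V$ is real-valued.

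By construction of the Markov product, the conditional law of $(V,V')$ given $\XX=\xx$ is the product measure $\mu_{\xx}\otimes\mu_{\xx}$. The tower property then gives
\[
\phi_{(Y,\XX)}(0)=\PP(V=V')=\int_{\mathbb{R}^d} (\mu_{\xx}\otimes\mu_{\xx})(D)\,\mathrm{d}\PP^{\XX}(\xx),
\]
where $D=\{(u,v)\in[0,1]^2:u=v\}$ is the diagonal.

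The key step is to evaluate the inner term by Fubini:
\[
(\mu_{\xx}\otimes\mu_{\xx})(D)=\int_{[0,1]}\mu_{\xx}(\{u\})\,\mu_{\xx}(\mathrm{d}u)=\sum_{a}\mu_{\xx}(\{a\})^2 .
\]
The second equality holds because $u\mapsto\mu_{\xx}(\{u\})$ vanishes outside the at most countable set of atoms of $\mu_{\xx}$. This is exactly the identity stated in the introduction.

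Both implications follow from this formula.
\begin{itemize}
\item[(i)$\Rightarrow$(ii):] If $\mu_{\xx}$ is atomless for $\PP^{\XX}$-a.e.\ $\xx$, the integrand vanishes almost everywhere, so $\phi_{(Y,\XX)}(0)=0$.
\item[(ii)$\Rightarrow$(i):] If $\phi_{(Y,\XX)}(0)=0$, the nonnegative integrand $\sum_a\mu_{\xx}(\{a\})^2$ vanishes for $\PP^{\XX}$-a.e.\ $\xx$. Hence every atom has zero mass, that is, $\mu_{\xx}$ has no atoms for these $\xx$.
\end{itemize}

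The main obstacle is measure-theoretic bookkeeping rather than any real difficulty. First, I need the measurability of $\xx\mapsto(\mu_{\xx}\otimes\mu_{\xx})(D)$. This follows because $\mu_{\xx}$ is a probability kernel, so $\mu_{\xx}\otimes\mu_{\xx}$ is a kernel as well and $D$ is Borel. Second, I must check that the statement in (i) does not depend on the chosen version of the conditional distribution. Two regular versions agree for $\PP^{\XX}$-a.e.\ $\xx$ as measures, so the atom structure agrees almost everywhere. Third, I must justify that the Markov product has the stated conditional product structure. This is the definition of $(Y,Y')$ and is inherited by the measurable transforms $V=F_Y(Y)$ and $V'=F_Y(Y')$.
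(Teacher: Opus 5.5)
Your proposal is correct and follows essentially the same route as the paper. Both disintegrate $\PP(V=V')$ over $\XX$, evaluate the conditional diagonal mass as $\sum_a \PP(V=a\mid \XX=\xx)^2$, and conclude both directions from the nonnegativity of this integrand; your measure-theoretic remarks on kernel measurability, independence from the chosen version, and the conditional product structure make explicit points the paper leaves implicit.
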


\begin{proof}
Let $V=F_Y(Y)$ and $V'=F_Y(Y')$. By disintegration,
\begin{align}
\phi_{(Y,\XX)}(0)
&=\mathbb{P}(V=V') \notag\\
&=\int_{\mathbb{R}^d}
\mathbb{P}(V=V'\mid \XX=\xx)\,d\mathbb{P}^{\XX}(\xx) \notag\\
&=\int_{\mathbb{R}^d}
\left(
\int_{[0,1]}
\mathbb{P}(V=v\mid \XX=\xx)\,
d\mathbb{P}^{V\mid \XX=\xx}(v)
\right)
d\mathbb{P}^{\XX}(\xx). \label{t1}
\end{align}

The inner integral satisfies
\[
\int_{[0,1]}
\mathbb{P}(V=v\mid \XX=\xx)\,
d\mathbb{P}^{V\mid \XX=\xx}(v)
=
\sum_{a}\mathbb{P}(V=a\mid \XX=\xx)^2,
\]
where the sum runs over all atoms of the conditional distribution of $V$ given $\XX=\xx$.

Hence
\[
\phi_{(Y,\XX)}(0)
=
\int_{\mathbb{R}^d}
\sum_{a}\mathbb{P}(V=a\mid \XX=\xx)^2
\,d\mathbb{P}^{\XX}(\xx).
\]

Since the integrand is nonnegative, $\phi_{(Y,\XX)}(0)=0$ holds if and only if
\[
\sum_{a}\mathbb{P}(V=a\mid \XX=\xx)^2=0
\quad \text{for }\mathbb{P}^{\XX}\text{-almost every }\xx,
\]
which is equivalent to the conditional distribution of $V$ given $\XX=\xx$ being non-atomic. This proves the claim.
\end{proof}

The previous theorem characterizes the absence of exact conditional atomic components in the Markov product. The next result identifies the opposite extreme.

\begin{theorem}
Let $(Y,\XX)$ be a $(d+1)$-dimensional random vector. Then the following statements are equivalent:
\begin{itemize}
\item[(i)] $\phi_{(Y,\XX)}(0)=1$.
\item[(ii)] There exists a measurable function $f$ such that $Y=f(\XX)$ almost surely.
\end{itemize}
\end{theorem}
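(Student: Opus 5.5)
We prove both directions through the identity established in the proof of the previous theorem,
\[
\phi_{(Y,\XX)}(0)=\int_{\mathbb{R}^d}\sum_{a}\PP(V=a\mid \XX=\xx)^2\,d\PP^{\XX}(\xx),\qquad V=F_Y(Y),
\]
together with the elementary bound $\sum_a p_a^2\le \bigl(\max_a p_a\bigr)\sum_a p_a\le 1$ for the atom masses $p_a$ of a probability measure. Equality $\sum_a p_a^2=1$ holds if and only if the measure is a single Dirac mass. Indeed, $\sum_a p_a^2=1$ forces $\sum_a p_a=1$ and $\max_a p_a=1$.

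\emph{(ii)$\Rightarrow$(i).} If $Y=f(\XX)$ almost surely, then the conditional law of $Y$ given $\XX=\xx$ is $\delta_{f(\xx)}$ for $\PP^{\XX}$-a.e.\ $\xx$. By conditional independence, $(Y,Y')$ given $\XX=\xx$ is $\delta_{f(\xx)}\otimes\delta_{f(\xx)}$, so $Y=Y'$ almost surely. Hence $V=V'$ almost surely and $\phi_{(Y,\XX)}(0)=1$. Alternatively, the integrand above equals $1$ a.e.

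\emph{(i)$\Rightarrow$(ii).} If $\phi_{(Y,\XX)}(0)=1$, then the integrand takes values in $[0,1]$ and integrates to $1$, so it equals $1$ for $\PP^{\XX}$-a.e.\ $\xx$. By the equality case above, the conditional law of $V$ given $\XX=\xx$ is a Dirac mass $\delta_{g(\xx)}$ for a.e.\ $\xx$. We need $g$ to be measurable. Define $g(\xx)=\int v\,d\PP^{V\mid\XX=\xx}(v)=\mathbb{E}[V\mid\XX=\xx]$, which is measurable because we work with a regular conditional distribution, i.e.\ a Markov kernel. Then $V=g(\XX)$ almost surely, since
\[
\PP(V\neq g(\XX))=\int \PP^{V\mid\XX=\xx}(\{g(\xx)\}^c)\,d\PP^{\XX}(\xx)=0.
\]

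The main obstacle is passing from $V$ back to $Y$, because $F_Y$ is continuous but need not be injective. Let $F_Y^{-1}$ be the generalized (quantile) inverse. It satisfies $F_Y^{-1}(F_Y(Y))=Y$ almost surely: the set where this fails lies in the union of flat pieces of $F_Y$, which are intervals of $\PP^Y$-measure zero, and there are at most countably many of them. Setting $f:=F_Y^{-1}\circ g$, which is measurable, gives $Y=f(\XX)$ almost surely.

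As an alternative route for this last step, $V=V'$ almost surely already gives $\xi(Y,\XX)=\kappa_{(Y,\XX)}(1)=3\int_0^1 1\,ds-2=1$ by Theorem~\ref{Intro}. The characterization $\xi(Y,\XX)=1\iff Y=f(\XX)$ from \cite{azadkia2021simple} then yields (ii) directly. This avoids the quantile-inverse argument, and I would present it as the primary proof of (i)$\Rightarrow$(ii).
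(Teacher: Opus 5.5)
Your proposal is correct. The argument you write out first is the paper's proof. Via \eqref{t1}, $\phi_{(Y,\XX)}(0)=1$ forces the conditional law of $V=F_Y(Y)$ given $\XX=\xx$ to be a Dirac mass for $\PP^{\XX}$-a.e.\ $\xx$. One then takes a measurable location $h$ of that atom and sets $f=F_Y^{-1}\circ h$. You only make explicit two points the paper passes over quickly: that $\sum_a p_a^2=1$ characterizes a single Dirac mass, and that $\xx\mapsto\mathbb{E}[V\mid\XX=\xx]$ is a concrete measurable choice of the atom (the paper just appeals to ``standard measurability of regular conditional distributions''). You also justify $F_Y^{-1}(F_Y(Y))=Y$ a.s., which the paper simply asserts.

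The route you say you would present as primary is genuinely different. There, $V=V'$ a.s.\ gives $\phi_{(Y,\XX)}\equiv 1$, hence $\xi(Y,\XX)=\kappa_{(Y,\XX)}(1)=1$ by Theorem~\ref{Intro}, and the Azadkia--Chatterjee characterization of $\xi=1$ finishes. This is valid and not circular: neither Theorem~\ref{Intro} nor the cited characterization depends on the present statement, and continuity of $F_Y$ supplies the non-degeneracy under which $\xi$ is defined and characterized. It also parallels how the paper proves its independence characterizations through $\kappa_{(Y,\XX)}(1)=\xi(Y,\XX)=0$.

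What it buys is brevity. The cost is that the substantive step, turning a degenerate Markov product into a measurable function of $\XX$, is outsourced to an external theorem whose proof contains essentially the argument you are avoiding, and $f$ is never exhibited. The direct argument is self-contained given \eqref{t1} and yields the explicit representation $f=F_Y^{-1}\circ h$. The paper reuses the same direct argument in its characterization of $\kappa_{(Y,\XX)}(t)=1$.
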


\begin{proof}
We first show \emph{(i) implies (ii)}. Assume that $\phi_{(Y,\XX)}(0)=1$. By definition,
\[
\phi_{(Y,\XX)}(0)
=\mathbb{P}\bigl(F_Y(Y)=F_Y(Y')\bigr).
\]
Hence $F_Y(Y)=F_Y(Y')$ almost surely. By \eqref{t1}, this implies that, for $\mathbb{P}^{\XX}$-almost every $\xx$, the conditional distribution of $F_Y(Y)$ given $\XX=\xx$ is degenerate. By the standard measurability of regular conditional distributions, there exists a measurable function $h$ such that $F_Y(Y)=h(\XX)$ almost surely.
Let $F_Y^{-1}(u):=\inf\{y\in\mathbb{R}:F_Y(y)\ge u\}$ denote the generalized inverse of $F_Y$. Since $F_Y$ is continuous, $Y=F_Y^{-1}(F_Y(Y))$ almost surely. Hence $Y=F_Y^{-1}(h(\XX))$ almost surely, which proves (ii).

That (ii) implies (i) again follows from \eqref{t1}.
\end{proof}

\subsection{Properties of \texorpdfstring{$\kappa$}{kappa}}

Having established the fundamental properties of $\phi_{(Y,\XX)}$, we now turn to the function
$\kappa_{(Y,\XX)}$. Owing to its integral representation, $\kappa_{(Y,\XX)}$ inherits several
structural properties from $\phi_{(Y,\XX)}$, while the integration step introduces additional
regularity. We begin by collecting basic analytic properties of $\kappa_{(Y,\XX)}$.

\begin{lem}\label{lemam2.4}
Let $(Y,\XX)$ be a $(d+1)$-dimensional random vector. Then the function $\kappa_{(Y,\XX)}:[0,1]\to[0,1]$ is convex, non-increasing, and Lipschitz continuous with Lipschitz constant~$3$.
\end{lem}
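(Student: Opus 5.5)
The plan is to read everything off the integral representation $\kappa_{(Y,\XX)}(t)=1-3\int_0^t g(s)\,\mathrm{d}s$ with integrand $g(s):=1-\phi_{(Y,\XX)}(s)$. Since $\phi_{(Y,\XX)}$ is the distribution function of $Z=|F_Y(Y)-F_Y(Y')|$, it is non-decreasing, right-continuous, and takes values in $[0,1]$. Hence $g$ is non-increasing with $0\le g\le 1$, and all three properties follow from this.

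\emph{Monotonicity and the Lipschitz bound.} For $0\le s<t\le 1$ we have
\[
\kappa_{(Y,\XX)}(s)-\kappa_{(Y,\XX)}(t)=3\int_s^t g(r)\,\mathrm{d}r\in[0,3(t-s)],
\]
because $0\le g\le 1$. This single estimate gives both that $\kappa_{(Y,\XX)}$ is non-increasing and that it is Lipschitz with constant $3$.

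\emph{Convexity.} The function $t\mapsto\int_0^t g(s)\,\mathrm{d}s$ has non-increasing integrand, so it is concave; then $\kappa_{(Y,\XX)}$, being $1$ minus three times a concave function, is convex. To make this explicit, take $s<u<t$ and compare average slopes. On $[s,u]$ the average of $g$ is at least $g(u)$, and on $[u,t]$ it is at most $g(u)$. This gives
\[
\frac{\kappa(u)-\kappa(s)}{u-s}\le\frac{\kappa(t)-\kappa(u)}{t-u},
\]
which is exactly the chord-slope characterization of convexity.

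\emph{Range $[0,1]$.} The upper bound is immediate: $\kappa_{(Y,\XX)}(0)=1$ and $\kappa_{(Y,\XX)}$ is non-increasing. The lower bound is the only step needing an external input. By monotonicity, $\kappa_{(Y,\XX)}(t)\ge\kappa_{(Y,\XX)}(1)=\xi(Y,\XX)$, and by the remark after \eqref{Phi2} (via Theorem~\ref{Intro}), $\kappa_{(Y,\XX)}(1)=\xi(Y,\XX)$. Nonnegativity of $\xi$ is known from \cite{azadkia2021simple}, since it is a ratio of integrated variances.

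The only point needing care is this last step, where we must invoke Theorem~\ref{Intro} together with $\xi\ge 0$; measurability and integrability of $g$ are automatic, because $g$ is monotone and bounded.
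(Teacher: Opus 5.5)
Your proposal is correct and follows essentially the same route as the paper: the difference identity $\kappa_{(Y,\XX)}(s)-\kappa_{(Y,\XX)}(t)=3\int_s^t\bigl(1-\phi_{(Y,\XX)}(r)\bigr)\,\mathrm{d}r$ gives monotonicity and the Lipschitz bound, the fact that $1-\phi_{(Y,\XX)}$ is non-increasing gives convexity, and $\kappa_{(Y,\XX)}(1)=\xi(Y,\XX)\ge 0$ gives the lower bound. Your chord-slope comparison only spells out the detail behind the paper's remark that the slopes of $\kappa_{(Y,\XX)}$ are non-decreasing.
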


\begin{proof}
Let $t\in[0,1]$ and $t'\in[0,1]$ with $t\le t'$. Then
\begin{align*}
\kappa_{(Y,\XX)}(t')-\kappa_{(Y,\XX)}(t)
&= -3(t'-t) + 3\int_t^{t'} \phi_{(Y,\XX)}(s)\,\mathrm{d}s \\
&= -3\int_t^{t'} \bigl(1-\phi_{(Y,\XX)}(s)\bigr)\,\mathrm{d}s.
\end{align*}
Since $0\le \phi_{(Y,\XX)}(s)\le 1$ for all $s\in[0,1]$, the integrand is non-negative. Hence $\kappa_{(Y,\XX)}$ is non-increasing. Moreover,
\[
\left|\kappa_{(Y,\XX)}(t')-\kappa_{(Y,\XX)}(t)\right|
\le 3|t'-t|,
\]
so $\kappa_{(Y,\XX)}$ is Lipschitz continuous with Lipschitz constant~$3$.

Since $\phi_{(Y,\XX)}$ is non-decreasing, the function $1-\phi_{(Y,\XX)}$ is non-increasing. Therefore the slopes of $\kappa_{(Y,\XX)}$ are non-decreasing, which implies that $\kappa_{(Y,\XX)}$ is convex.

For the bounds, note that
\[
\kappa_{(Y,\XX)}(t)
= 1 - 3\int_0^t \bigl(1-\phi_{(Y,\XX)}(s)\bigr)\,\mathrm{d}s.
\]
Since $1-\phi_{(Y,\XX)}(s)\ge 0$, it follows that $\kappa_{(Y,\XX)}(t)\le 1$ for all $t\in[0,1]$. Further, as $\kappa_{(Y,\XX)}$ is non-increasing, its minimum is attained at $t=1$, and
\[
\kappa_{(Y,\XX)}(1)=\xi(Y,\XX)\ge 0.
\]
Thus $\kappa_{(Y,\XX)}(t)\in[0,1]$ for all $t\in[0,1]$.
\end{proof}

The monotonicity of $\kappa_{(Y,\XX)}$ allows us to characterize independence through the vanishing
of $\kappa_{(Y,\XX)}$ at any positive argument.

\begin{theorem}\label{ter2}
Let $(Y,\XX)$ be a $(d+1)$-dimensional random vector. The following statements are equivalent:
\begin{itemize}
\item[(i)] There exists some $t\in(0,1]$ such that $\kappa_{(Y,\XX)}(t)=0$.
\item[(ii)] $\kappa_{(Y,\XX)}(1)=0$.
\item[(iii) ] $Y$ and $\XX$ are independent.
\item[(iv)] $\kappa_{(Y,\XX)}(1)=\kappa^{\perp}_{(Y,\XX)}(1)$.
\end{itemize}
\end{theorem}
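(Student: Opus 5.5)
We will prove the cycle (i)$\Rightarrow$(ii)$\Rightarrow$(iii)$\Rightarrow$(iv)$\Rightarrow$(ii)$\Rightarrow$(i). Only the first step needs any work. It rests on Lemma~\ref{lemam2.4}, the identity $\kappa_{(Y,\XX)}(1)=\xi(Y,\XX)$, and the characterization of independence through $\xi$ from \cite{azadkia2021simple}.

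\emph{(i)$\Rightarrow$(ii).} Suppose $\kappa_{(Y,\XX)}(t_0)=0$ for some $t_0\in(0,1]$. By Lemma~\ref{lemam2.4}, $\kappa_{(Y,\XX)}$ is non-increasing and takes values in $[0,1]$. Hence for every $t\in[t_0,1]$ we have $0\le\kappa_{(Y,\XX)}(t)\le\kappa_{(Y,\XX)}(t_0)=0$, and in particular $\kappa_{(Y,\XX)}(1)=0$.

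\emph{(ii)$\Rightarrow$(iii).} Since $\kappa_{(Y,\XX)}(1)=\xi(Y,\XX)$, condition (ii) says $\xi(Y,\XX)=0$. By the characterization of $\xi$ in \cite{azadkia2021simple}, this is equivalent to independence of $Y$ and $\XX$.

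\emph{(iii)$\Rightarrow$(iv).} Under independence, the preceding theorem gives $\phi_{(Y,\XX)}=\phi^\perp_{(Y,\XX)}$. Therefore
\[
\kappa_{(Y,\XX)}(1)=1-3\int_0^1(1-s)^2\,\mathrm{d}s=0=\kappa^\perp_{(Y,\XX)}(1).
\]

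\emph{(iv)$\Rightarrow$(ii)$\Rightarrow$(i).} Because $\kappa^\perp_{(Y,\XX)}(1)=(1-1)^3=0$, condition (iv) is literally (ii). Choosing $t=1$ in (i) then shows (ii)$\Rightarrow$(i).

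The only step with content is (i)$\Rightarrow$(ii). It depends on two facts: that $\kappa_{(Y,\XX)}$ is non-increasing, and that it is bounded below by $\xi\ge0$. The latter ensures that a zero at some $t_0$ forces zeros on all of $[t_0,1]$. Both facts are already contained in Lemma~\ref{lemam2.4}, so I expect no real obstacle. The remaining care is bookkeeping: the independence characterization of $\xi$ is quoted from \cite{azadkia2021simple}, which requires continuity of $F_Y$, and this holds by the standing assumption.
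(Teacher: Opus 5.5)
Your proof is correct and follows essentially the same route as the paper: monotonicity plus nonnegativity from Lemma~\ref{lemam2.4} for (i)$\Rightarrow$(ii), the identity $\kappa_{(Y,\XX)}(1)=\xi(Y,\XX)$ together with the Azadkia--Chatterjee characterization for (ii)$\Rightarrow$(iii), and $\kappa^{\perp}_{(Y,\XX)}(1)=0$ to close the cycle. In (iii)$\Rightarrow$(iv) you compute $\kappa_{(Y,\XX)}(1)$ explicitly via $\phi_{(Y,\XX)}=\phi^{\perp}_{(Y,\XX)}$, which is a slightly more detailed version of a step the paper only asserts.
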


\begin{proof}
We first show that (i) implies (ii).
Since $\kappa_{(Y,\mathbf{X})}$ is non-increasing, Lemma~\ref{lemam2.4} together with the lower bound $\kappa_{(Y,\XX)}\ge 0$ implies that the existence of some $t>0$ such that
$\kappa_{(Y,\mathbf{X})}(t)=0$ yields
\[
\kappa_{(Y,\mathbf{X})}(1)=0.
\]

Next we show that \emph{(ii) implies (iii).}
By definition, $\kappa_{(Y,\XX)}(1)=\xi(Y,\XX)$. Since $\xi(Y,\XX)=0$ characterizes independence,
it follows that $Y$ and $\XX$ are independent.

Next we show that \emph{(iii) implies (iv).}
If $Y$ and $\XX$ are independent, then
\[
\kappa_{(Y,\XX)}(1)=\kappa^{\perp}_{(Y,\XX)}(1).
\]

Finally we show that \emph{(iv) implies (i).}
Since $\kappa^{\perp}_{(Y,\XX)}(1)=0$, equality implies $\kappa_{(Y,\XX)}(1)=0$. Choosing $t=1$
yields $\kappa_{(Y,\XX)}(t)=0$, which proves~(i).
\end{proof}

Finally, we consider the opposite extreme and show that maximal values of $\kappa_{(Y,\XX)}$
correspond to perfect dependence.

\begin{theorem}
Let $(Y,\XX)$ be a $(d+1)$-dimensional random vector. Then the following are equivalent:
\begin{enumerate}
\item[(i)] There exists $t\in(0,1]$ such that $\kappa_{(Y,\XX)}(t)=1$.
\item[(ii)] $Y$ is perfectly dependent on $\XX$.
\end{enumerate}
\end{theorem}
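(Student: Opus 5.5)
The plan is to reduce everything to the previous theorem, which characterizes perfect dependence by $\phi_{(Y,\XX)}(0)=1$. The structural facts needed are that $\phi_{(Y,\XX)}$ is a distribution function, hence non-decreasing and right-continuous, and that $0\le\phi_{(Y,\XX)}\le 1$.

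For (ii)$\Rightarrow$(i), assume $Y=f(\XX)$ almost surely. The previous theorem then gives $\phi_{(Y,\XX)}(0)=1$. By monotonicity and $\phi_{(Y,\XX)}\le 1$, we get $\phi_{(Y,\XX)}(s)=1$ for all $s\in[0,1]$. The integrand $1-\phi_{(Y,\XX)}(s)$ in \eqref{Phi2} therefore vanishes identically, so $\kappa_{(Y,\XX)}(t)=1$ for every $t$, in particular for $t=1$.

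For (i)$\Rightarrow$(ii), suppose $\kappa_{(Y,\XX)}(t)=1$ for some $t\in(0,1]$. By \eqref{Phi2} this means
\[
\int_0^t \bigl(1-\phi_{(Y,\XX)}(s)\bigr)\,\mathrm{d}s=0 .
\]
Since the integrand is nonnegative, $\phi_{(Y,\XX)}(s)=1$ for Lebesgue-almost every $s\in[0,t]$. Because $\phi_{(Y,\XX)}$ is non-decreasing, this upgrades to $\phi_{(Y,\XX)}(s)=1$ for every $s\in(0,t]$: any $s\in(0,t]$ has points $s'<s$ arbitrarily close to it where $\phi=1$, and then $\phi(s)\ge\phi(s')=1$. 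Right-continuity of the distribution function of $Z=|F_Y(Y)-F_Y(Y')|$ at $0$ then gives
\[
\phi_{(Y,\XX)}(0)=\lim_{s\downarrow 0}\phi_{(Y,\XX)}(s)=1 .
\]
Equivalently, $\PP(Z=0)=\lim_{s\downarrow0}\PP(Z\le s)$ by continuity from above of the measure. Applying the previous theorem, (i)$\Rightarrow$(ii), yields a measurable $f$ with $Y=f(\XX)$ almost surely.

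The only step that needs care is passing from "$\phi=1$ almost everywhere on $[0,t]$" to $\phi(0)=1$. The a.e.\ statement alone says nothing about the single point $0$, and it is the right-continuity of a distribution function that closes this gap. It is worth stating explicitly that $\phi_{(Y,\XX)}$ is defined with the non-strict inequality $\le$ in \eqref{Phi1}, so it is the (right-continuous) CDF of $Z$; with a strict inequality the argument would break at $0$. Everything else is immediate from the definition \eqref{Phi2} and the preceding theorem.
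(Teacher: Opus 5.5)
Your proof is correct and follows the paper's argument essentially step for step: $\int_0^t\bigl(1-\phi_{(Y,\XX)}(s)\bigr)\,\mathrm{d}s$ vanishes, monotonicity upgrades the a.e.\ statement to all $s\in(0,t]$, and right-continuity at $0$ gives $\phi_{(Y,\XX)}(0)=1$; conversely, perfect dependence forces $\phi_{(Y,\XX)}\equiv 1$. The only difference is that you cite the preceding characterization ($\phi_{(Y,\XX)}(0)=1$ if and only if $Y=f(\XX)$ a.s.) in both directions, whereas the paper repeats the degeneracy and generalized-inverse argument inline; you also correctly observe that monotonicity alone, not right-continuity, suffices for the step to $(0,t]$.
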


\begin{proof}
Let $V=F_Y(Y)$ and $V'=F_Y(Y')$.

We first show that \emph{(i) implies (ii).}
If $\kappa_{(Y,\XX)}(t)=1$ for some $t\in(0,1]$, then, by the definition of $\kappa_{(Y,\XX)}$,
\[
\int_0^t \bigl(1-\phi_{(Y,\XX)}(s)\bigr)\,\mathrm{d}s=0.
\]
Since $0\le \phi_{(Y,\XX)}(s)\le 1$ for all $s\in[0,1]$, this implies
\[
\phi_{(Y,\XX)}(s)=1
\quad \text{for Lebesgue-a.e. } s\in[0,t].
\]
Because $\phi_{(Y,\XX)}$ is non-decreasing and right-continuous, it follows that
\[
\phi_{(Y,\XX)}(s)=1
\quad \text{for all } s\in(0,t].
\]
By right-continuity at $0$, we obtain $\phi_{(Y,\XX)}(0)=1$, and hence
\[
\mathbb{P}(V=V')=1.
\]

Since $V$ and $V'$ are conditionally independent and identically distributed given $\XX$, this means that, for $\mathbb{P}^{\XX}$-almost every $\xx$, two independent draws from the conditional distribution of $V$ given $\XX=\xx$ are equal almost surely. Therefore this conditional distribution is degenerate. Hence there exists a measurable function $h$ such that
\[
V=h(\XX)
\quad \text{almost surely}.
\]
Since $F_Y$ is continuous, we have
\[
Y=F_Y^{-1}(F_Y(Y))=F_Y^{-1}(V)
\quad \text{almost surely},
\]
where $F_Y^{-1}(u):=\inf\{y\in\mathbb{R}:F_Y(y)\ge u\}$. Thus, with
\[
f:=F_Y^{-1}\circ h,
\]
we obtain
\[
Y=f(\XX)
\quad \text{almost surely}.
\]
Hence $Y$ is perfectly dependent on $\XX$.

Conversely, assume that $Y$ is perfectly dependent on $\XX$, i.e. $Y=f(\XX)$ almost surely for some measurable function $f$. Then the conditional distribution of $Y$ given $\XX$ is degenerate. Hence, under the Markov product,
\[
Y=Y'
\quad \text{almost surely},
\]
and therefore
\[
F_Y(Y)=F_Y(Y')
\quad \text{almost surely}.
\]
It follows that $\phi_{(Y,\XX)}(s)=1$ for all $s\in[0,1]$. Consequently,
\[
\kappa_{(Y,\XX)}(t)
= 1 - 3\int_0^t \bigl(1-\phi_{(Y,\XX)}(s)\bigr)\,\mathrm{d}s
= 1
\]
for all $t\in(0,1]$.
\end{proof}

\subsection{Examples}

Before turning to the examples, we briefly recall the basic concepts of copulas and the associated Markov product. 
Consider a $(d+1)$-dimensional random vector $(Y,\XX)$ with continuous marginal distribution functions $F_i$ of $X_i$, $i \in \{1,\ldots,d\}$, and $F_Y$ of $Y$, and connecting copula $C$. Denoting
\[
U_i := F_{X_i}(X_i), \quad i \in \{1,\ldots,d\}, \qquad V := F_Y(Y),
\]
we have $(\mathbf{U},V) \sim C$. We extend the random vector $(\mathbf{U},V)$ and consider the $(d+2)$-dimensional random vector $(\mathbf{U},V,V')$, where $V$ and $V'$ share the same conditional distribution and are conditionally independent given $\mathbf{U}$. Using disintegration, the distribution function of $(V,V')$ is a copula.

Let $\mathcal{C}^k$ denote the space of $k$-dimensional copulas. Then the map $\psi : \mathcal{C}^{d+1} \to \mathcal{C}^2$, defined by
\[
\psi(C)(s,t)
:= \PP(V \le s, V' \le t), \qquad (s,t) \in [0,1]^2,
\]
transforms every $(d+1)$-dimensional copula into an exchangeable bivariate copula, see \cite{fuchs2024JMVA}.

\begin{example}([Fréchet copula]\cite{fuchs2024JMVA}\label{fre})
For \(\alpha,\beta \in [0,1]\) with \(\alpha + \beta \leq 1\), the Fréchet copula is defined by
\[
C_{\alpha, \beta} := \alpha M + (1 - \alpha - \beta) \Pi + \beta W,
\]
where $W$ denotes the lower Fr\'echet--Hoeffding bound (see \cite{durante2015principles}), and the Markov product of $C_{\alpha,\beta}$ is given by
\[
\psi(C_{\alpha,\beta}) = C_{\alpha^2 + \beta^2, 2 \alpha \beta}.
\]
We obtain
\begin{align*}
\phi_{(Y,\XX)}(t) &= (\alpha^2 + \beta^2) + 2 (1 - (\alpha^2 + \beta^2) - \alpha \beta) t - (1 - (\alpha + \beta)^2) t^2,
\end{align*}
\begin{align*}
\kappa_{(Y,\XX)}(t) &= 1+3 (\alpha^2 + \beta^2-1) t + 3 (1 - (\alpha^2 + \beta^2) - \alpha \beta) t^2 - (1 - (\alpha + \beta)^2) t^3 .
\end{align*}
For visualization, see Figure~\ref{fig:combined}. The jump of $\phi_{(Y,\mathbf{X})}(0)=\alpha^2+\beta^2$ at $0$ is due to the $\alpha^2$ contribution through $M$ and the $\beta^2$ contribution through $W$. Both effects reflect functional relationships, and the quadratic terms arise from the construction using $Y$ and an independent copy $Y’$. In this sense, the resulting structure—namely, the mass concentrated on the diagonal—does not depend on our choice of measurement function but is already inherent in the Markov product.

In particular, we obtain $\kappa_{(Y,\mathbf{X})}(1)=\alpha^2-\alpha\beta+\beta^2.$
An interesting feature of the endpoint value $\kappa_{(Y,\mathbf{X})}(1)$ is that the mass not concentrated on the diagonal, that is, the independent component $\Pi$, is neutral in this particular scalar expression. What is noteworthy here is that neither $\phi_{(Y,X)}$ in general nor $\kappa$---even in the explicit case $t=1$---exhibits a monotone increase of the dependence measure when $\alpha$ or $\beta$ is increased. This can be illustrated by considering $\kappa$ at $t=1$:
\[
\frac{\partial}{\partial \alpha}\kappa_{(Y,\XX)}(1)=2\alpha-\beta,
\qquad
\frac{\partial}{\partial \beta}\kappa_{(Y,\XX)}(1)=2\beta-\alpha.
\]

\begin{figure}[ht]
    \centering
    \includegraphics[width=0.8\textwidth]{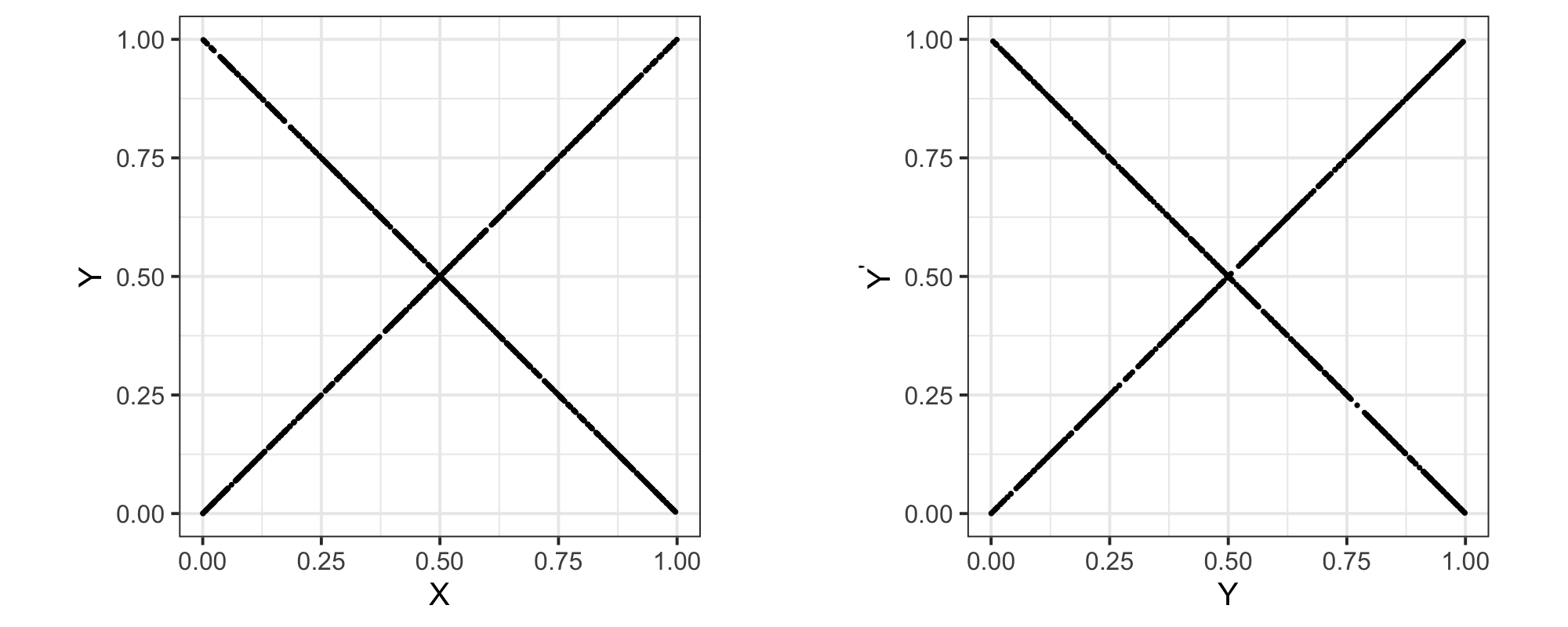}

    \vspace{0.8em}

    \includegraphics[width=0.8\textwidth]{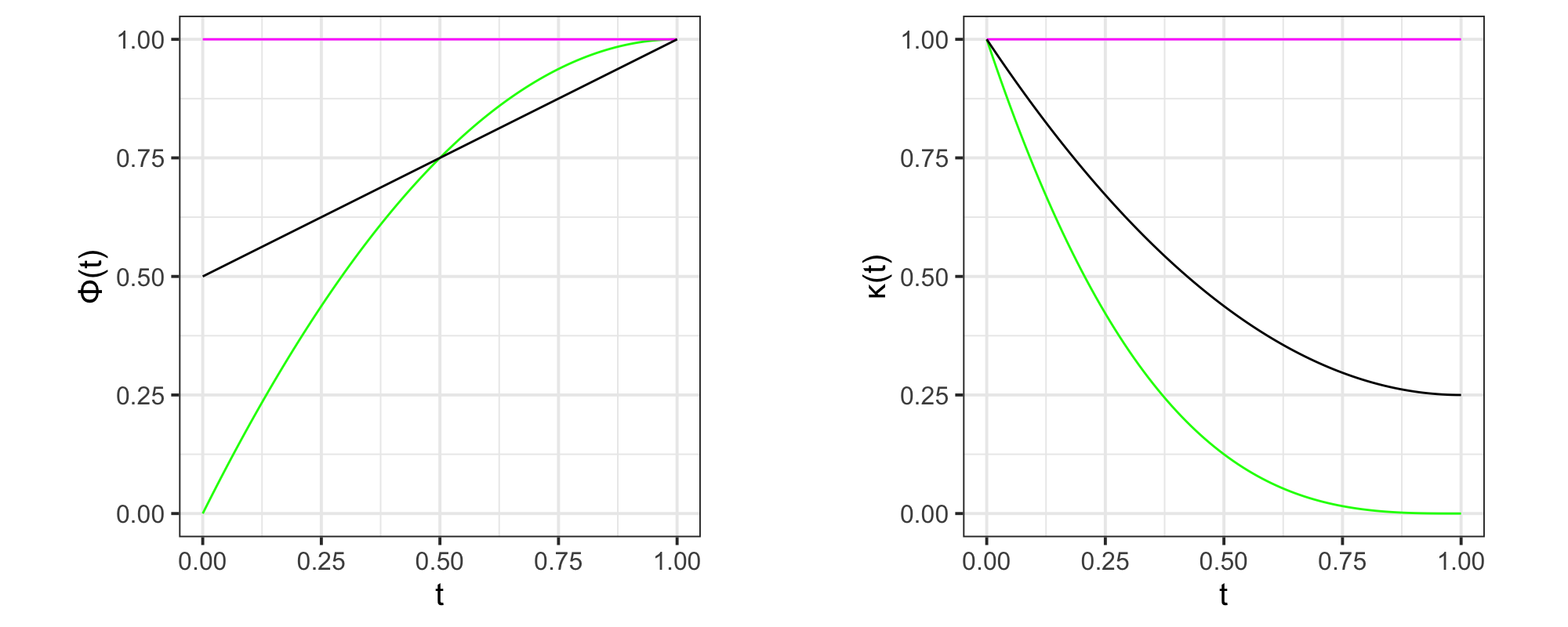}

   \caption{Example \ref{fre}; First line: Scatterplot with $n = 1000$ of a random vector $(Y,X)$ associated with a Fréchet copula with parameters $\alpha=\beta=0.5$ and its Markov product. Second line: Visualization of $\phi_{(Y,X)}$ and $\kappa_{(Y,X)}$.}
\label{fig:combined}
\end{figure}

\end{example}

\begin{example}(\label{F_Yaus}[Gaussian]\cite{fuchs2024JMVA})
Let $C_\rho$ be an equicorrelated $(d+1)$-dimensional Gaussian copula with parameter $\rho$.
From \cite{fuchs2024JMVA}, we know for $\rho \in \left(\frac{-1}{d},1\right)$ that $\psi(C_\rho)$ corresponds to a bivariate Gaussian copula with correlation parameter
\[
\rho^*(d) = \frac{d\,\rho^2}{1 + (d-1)\rho}.
\]
For $\rho \in (0,1)$, we have $\rho^*(d)<\rho$ and
$\rho^*(d)<\rho^*(d')$ for all $d,d'\in\mathbb{N}$ with $d<d'$.
The expressions for $\phi_{(Y,\mathbf{X})}(t)$ and $\kappa_{(Y,\mathbf{X})}(t)$ can be computed explicitly, but they do not admit a meaningful simplification. We therefore omit their analytical forms and instead refer to the visualization in Figure~\ref{fig:gaussian_combined}. We observe that, as $d$ increases, $\phi_{(Y,\XX)}(t)$ and $\kappa_{(Y,\XX)}(t)$ increase monotonically in $d$ for all $t\in(0,1]$. In this setting, $\phi_{(Y,\mathbf{X})}(0)=0$, since $\mathbb{P}(Y=y\mid\mathbf{X})=0$ for all $y\in\mathbb{R}$.

In the Gaussian equicorrelated setting, Chatterjee's rank correlation of $C_\rho$
coincides with Spearman's footrule of $\psi(C_\rho)$; see \cite{ansari2025continuity,fuchs2024JMVA}.
Hence, using the classical arcsine representation for Gaussian copulas
\cite{nelsen2006}, we obtain
\[
\kappa_{(Y,\XX)}(1)
=
\frac{3}{\pi}\arcsin\!\left(
\frac{1+\rho^*(d)}{2}
\right)-\frac{1}{2}
=
\frac{3}{\pi}\arcsin\!\left(
\frac{1+(d-1)\rho+d\rho^2}{2(1+(d-1)\rho)}
\right)-\frac{1}{2}.
\]

\begin{figure}[ht]
    \centering
    \includegraphics[width=0.9\textwidth]{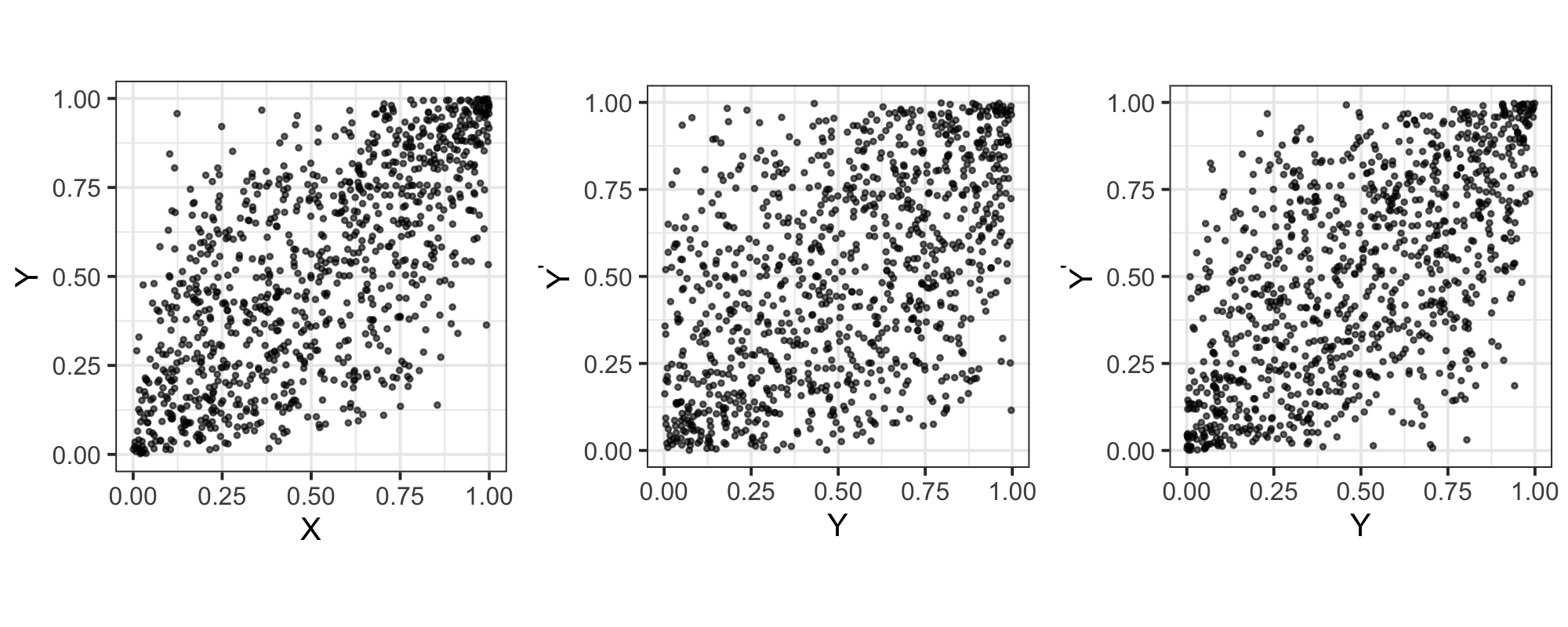}

    \vspace{0.8em}

    \includegraphics[width=0.8\textwidth]{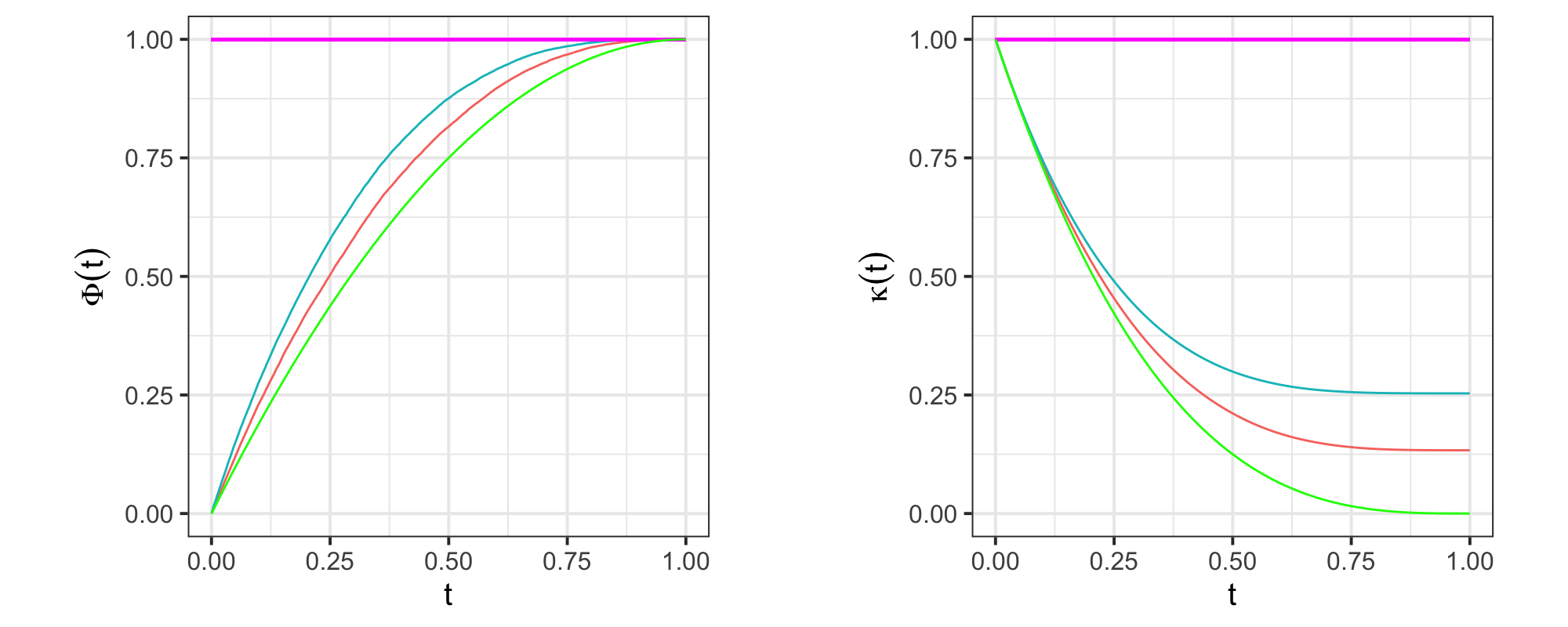}

\caption{%
Gaussian copula example with base correlation parameter
\(\rho=\sqrt{0.5}\). The first row shows simulated samples
(\(n = 1000\)): a bivariate Gaussian copula sample and the corresponding
Markov-product copulas \(C_{\rho^*(d)}\), with
\(\rho^*(1)=\tfrac12\) for \(d=1\) and
\(\rho^*(5)=\tfrac{5}{2+4\sqrt2}\) for \(d=5\).
The second row displays the corresponding functions
\(\phi_{(Y,\mathbf X)}\) and \(\kappa_{(Y,\mathbf X)}\) for the
equicorrelated Gaussian copula with \(d=1\) and \(d=5\).}
    \label{fig:gaussian_combined}
\end{figure}
\end{example}

\begin{example}([Marshall--Olkin copula]\cite{fuchs2024JMVA}\label{Olkin})
Let $\alpha,\beta \in [0,1]$. The Marshall--Olkin copula is defined by
\[
C_{\alpha,\beta}(u,v) := \min\{u^{1-\alpha}v,\; uv^{1-\beta}\}.
\]
It holds that $C_{\alpha,\beta} = \Pi$, where $\Pi$ denotes the independence copula (see \cite{durante2015principles}), if and only if $\min\{\alpha,\beta\} = 0$, and that $C_{\alpha,\beta} = M$, the upper Fr\'echet--Hoeffding copula (see \cite{durante2015principles}), if and only if $\min\{\alpha,\beta\} = 1$.

In the following, we focus on the case $\alpha = 1$. On the one hand, this choice keeps us within the same copula class; on the other hand, it is particularly illustrative, as shown in Figure~\ref{fig:MO_combined}. In this setting, the copula exhibits a functional component with positive mass, whose graph is given by $(u, u^{1/\beta})$. This structure can be detected, since the Markov product projects it onto the diagonal, where it is then captured analytically by the value $\phi_{(Y,\mathbf{X})}(0)$. Thus
\[
\psi(C_{1,\beta})(s,t)=C_{\beta,\beta}(s,t).
\]
Conditionally on $U=u$, the functional component induces an atom at $u^{1/\beta}$ of size
\[
u^{(1-\beta)/\beta}.
\]
The squared expectation of this atom size is
\[
\mathbb{E}\!\left[U^{2(1-\beta)/\beta}\right]
=
\int_0^1 u^{2(1-\beta)/\beta}\,\mathrm{d}u
=
\frac{\beta}{2-\beta}.
\]
Hence,
\[
\phi_{(Y,\mathbf{X})}(0)
= \frac{\beta}{2-\beta},
\qquad
\kappa_{(Y,\mathbf{X})}(1)
= \frac{2\beta}{3-\beta}.
\]
We thus observe that functional relationships are transformed into mass on the diagonal, provided that for each $\xx$ the conditional distribution of $F_Y(Y)\mid \XX=\xx$ has at most one atom.

\begin{figure}[ht]
    \centering
    \includegraphics[width=0.8\textwidth]{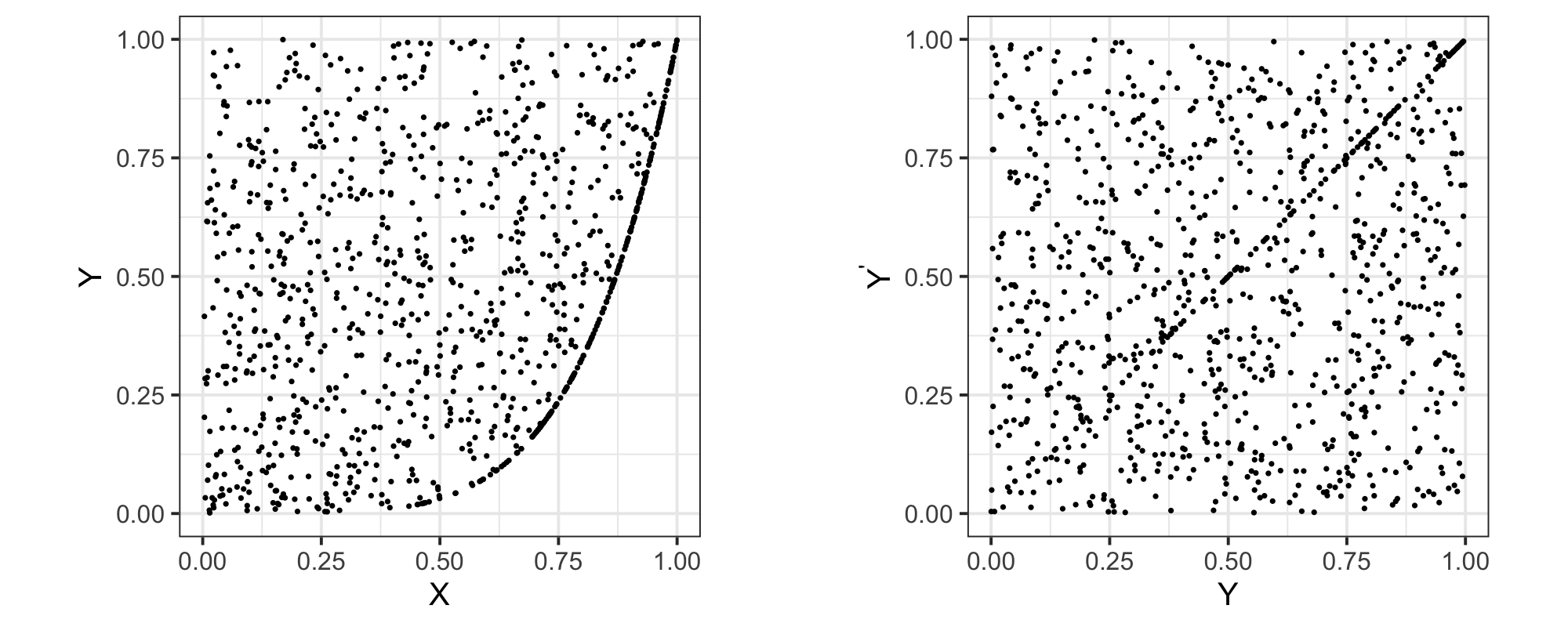}

    \vspace{0.8em}

    \includegraphics[width=0.8\textwidth]{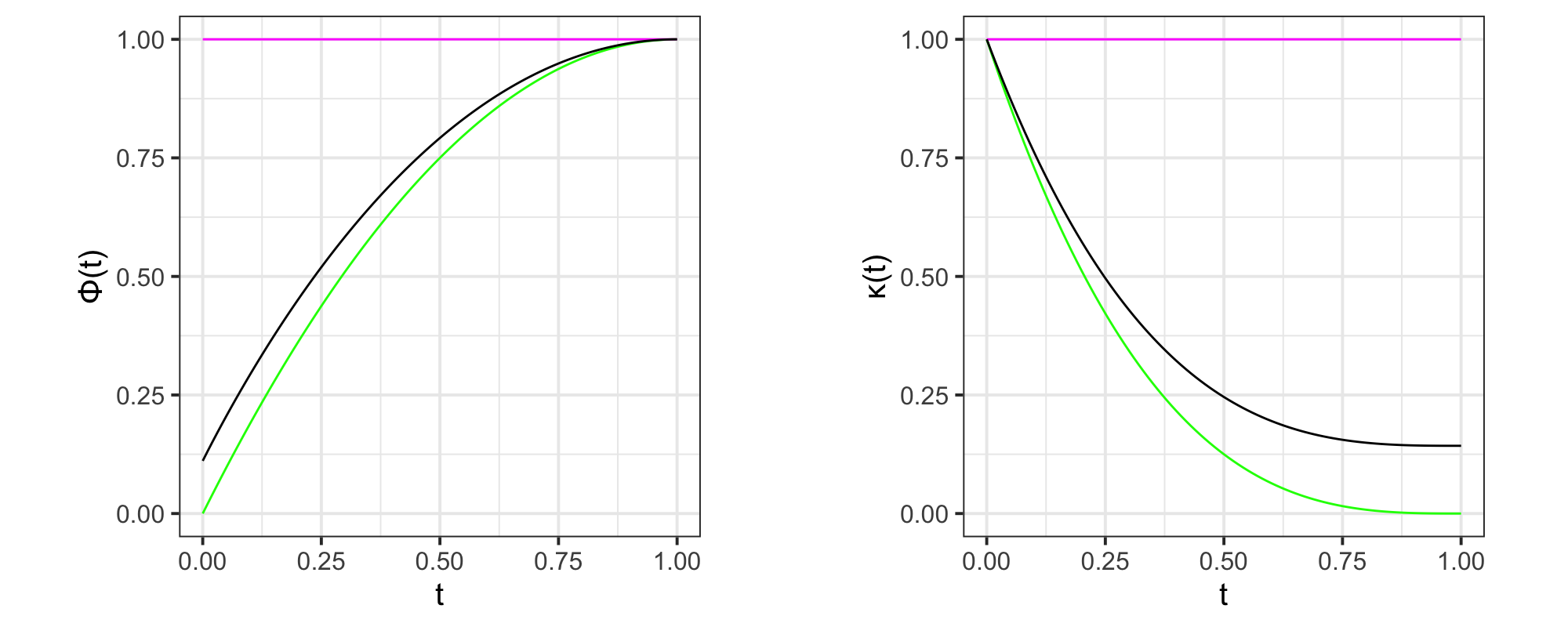}

    \caption{%
    First row: Samples of size $n = 1000$ drawn from a Marshall--Olkin copula with
    parameter vector $(\alpha,\beta)=(1,0.2)$ and its Markov product.
    Second row: Visualization of $\phi_{(Y,\mathbf{X})}$ and $\kappa_{(Y,\mathbf{X})}$
    for the same Marshall--Olkin copula.}
    \label{fig:MO_combined}
\end{figure}
\end{example}

\begin{example}([Shuffle copula and convex combination]\label{Jump})
Let \( h \in \mathbb{R} \), and define the bivariate shuffle copula $C_{h}$ as the copula obtained via \( f(x) = (x + h) \mod 1 \). For $m\in\mathbb{N}$, let
\[
C_m := \frac{1}{2^m}\sum_{i=1}^{2^m} C_{\frac{i}{2^m}},
\]
where $C_{i/2^m}$ denotes the shuffle copula generated by the shift $h=i/2^m$. This is the Jump copula with parameter $m\in \mathbb{N}$ and constitutes a special case of a convex combination of shifted copulas. We obtain:
\[
\psi(C_m) = C_m.
\]
Closed-form expressions of $\phi_{(Y,\XX)}$ and $\kappa_{(Y,\XX)}$ do not admit meaningful simplifications, so we focus on their graphical illustration; see Figure~\ref{fig:shuffle_combined}.

\[
\phi_{(Y,\XX)}(0)
= \frac{1}{2^m},
\qquad
\kappa_{(Y,\XX)}(1)
= \frac{1}{2^{2m}}.
\]
This class of copulas is relevant in several respects. An increase in the degree of dependence induced by conditioning on an additional variable does not necessarily lead to an increase of $\phi_{(Y,X)}$ in general; see Figure~\ref{fig:shuffle_combined}. Furthermore, the class is idempotent with $\lim_{m\to\infty} C_m=\Pi$ in distribution, and provides an explicit illustration of how the Markov product preserves and transforms dependence structures; see \cite{durante2015principles}.

\begin{figure}[ht]
    \centering
    \includegraphics[width=0.8\textwidth]{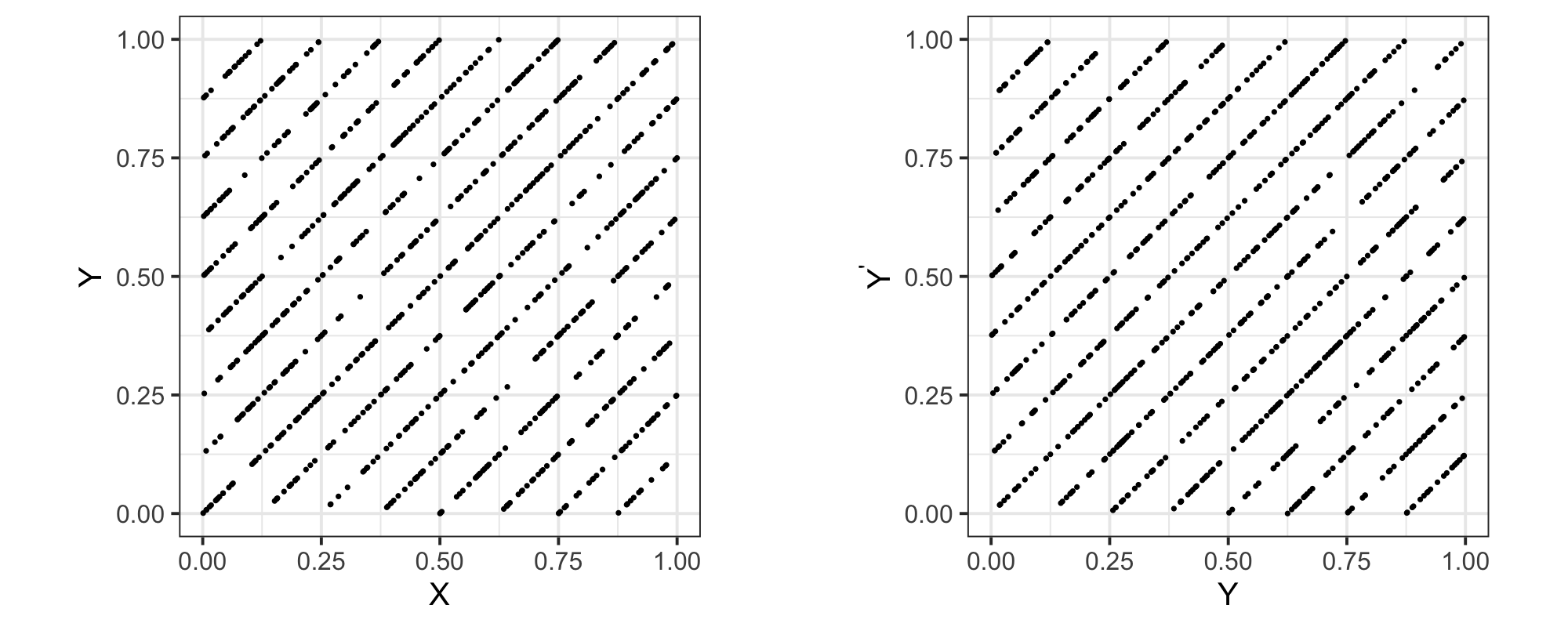}

    \vspace{0.8em}

    \includegraphics[width=0.8\textwidth]{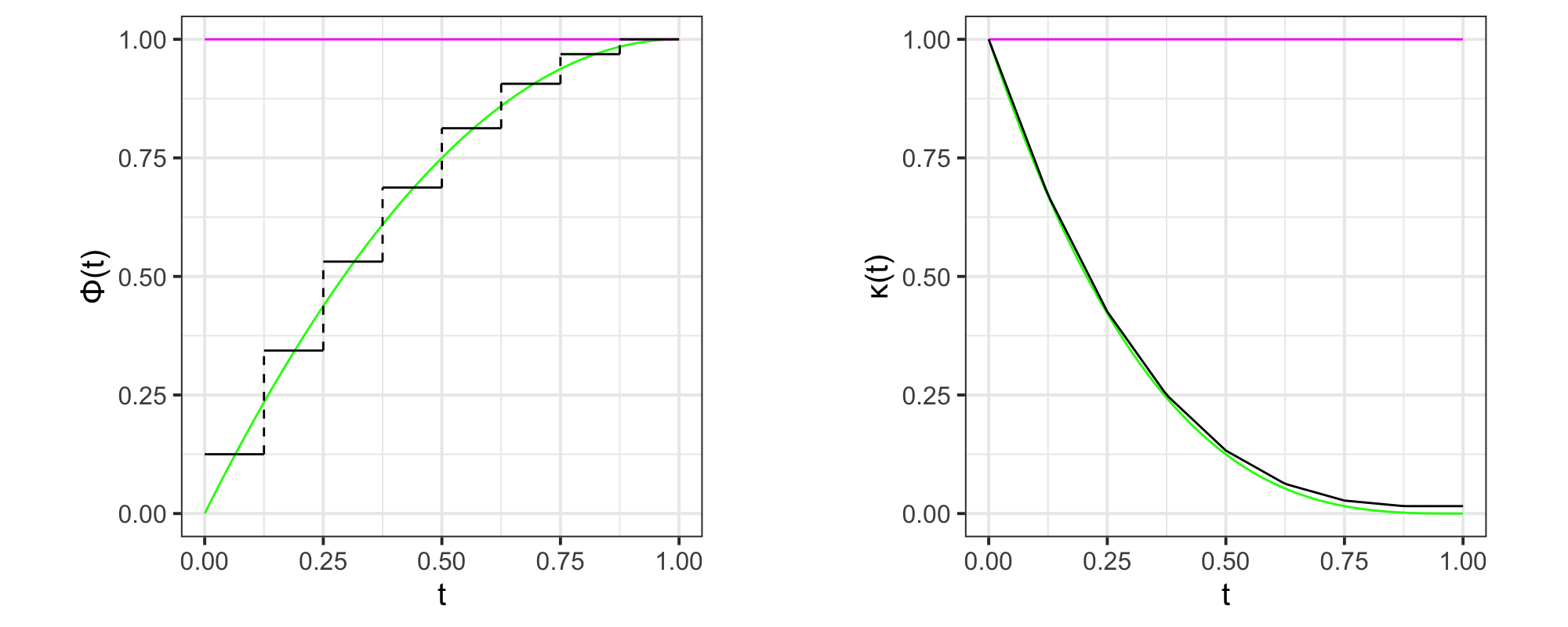}

\caption{%
Top: Scatterplot of the Jump copula $C_m$ with sample size $n = 1000$ and its Markov product
for $m=3$.
Bottom: Corresponding plots of $\phi_{(Y,\mathbf{X})}$ and $\kappa_{(Y,\mathbf{X})}$
for Example~\ref{Jump}.}
    \label{fig:shuffle_combined}
\end{figure}

\end{example}

\begin{example}([LSL-copula] \cite{fuchs2026semilinear}\label{LSLexample})
Let $\delta \colon [0,1]\to[0,1]$ be a copula diagonal, that is, a non-decreasing,
$2$-Lipschitz continuous function satisfying $\delta(t)\le t$ for all $t\in[0,1]$
with
\[
t \mapsto \frac{\delta(t)}{t} \ \text{non-decreasing},
\qquad
t \mapsto \frac{\delta(t)}{t^2} \ \text{non-increasing}.
\]
Denote by $\mathcal{D}^{\mathrm{LSL}}$ the class of all such functions.
For $\delta\in\mathcal{D}^{\mathrm{LSL}}$, the associated lower semilinear copula
$C_\delta\colon[0,1]^2\to[0,1]$ is given by
\[
C_\delta(x,y)=
\begin{cases}
y\,\dfrac{\delta(x)}{x} & y\le x, \\[1.2ex]
x\,\dfrac{\delta(y)}{y} & \text{otherwise}.
\end{cases}
\]
Then
\[
\operatorname{sing}(C_{\delta})
=
2\int_{[0,1]}\frac{\delta(x)}{x}\,d\lambda(x)-1
\]
denotes the mass of the singular part of the copula \(C_\delta\).
The Markov product preserves the LSL structure and satisfies
\[
\psi(C_\delta)=C_{\delta^*},
\]
where
\[
\delta^*(x)
= \frac{\delta(x)^2}{x}
+ x^2 \int_x^1 \left( \frac{\delta'(u)}{u} \right)^2 du,
\]
where $\delta'$ denotes the a.e. derivative of the Lipschitz
function $\delta$.
Closed-form expressions for $\phi_{(Y,\XX)}$ and $\kappa_{(Y,\XX)}$
are in general not tractable, and we therefore focus on their graphical
representation; see Figure~\ref{fig:LSL_combined}.
In particular, we observe that
\[
\phi_{(Y,\XX)}(0)=\operatorname{sing}(C_{\delta^*})=
-1
+ 2\int_0^1 \frac{\delta(u)^2}{u^2}\,du
+ \int_0^1 \bigl(\delta'(u)\bigr)^2\,du 
\]
and
\[
\kappa_{(Y,\XX)}(1)
=
\frac{2 \, \tau(C_\delta)^2}{1+\tau(C_\delta)},
\]
where $\tau$ denotes Kendall's tau. This identity follows from the lower semilinear copula results in \cite{fuchs2026semilinear}.

\begin{figure}[ht]
    \centering
    \includegraphics[width=0.85\textwidth]{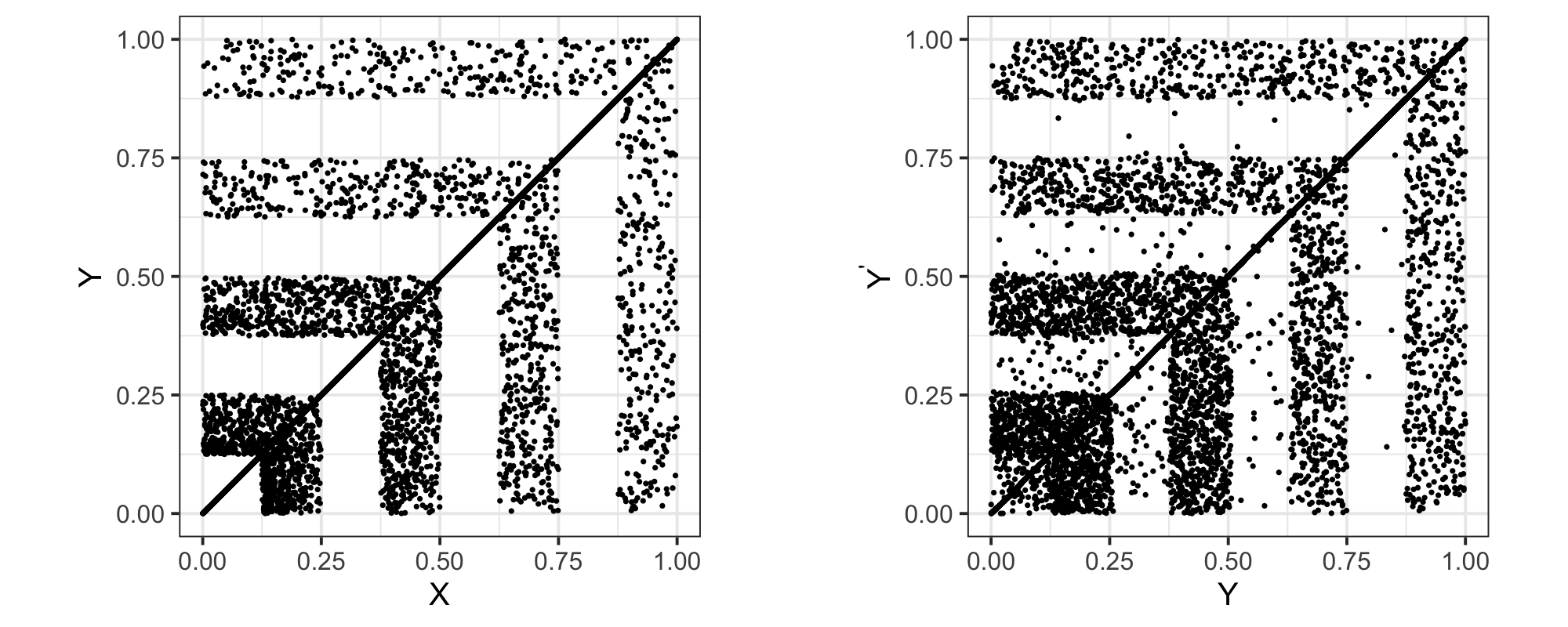}

    \vspace{0.8em}

    \includegraphics[width=0.85\textwidth]{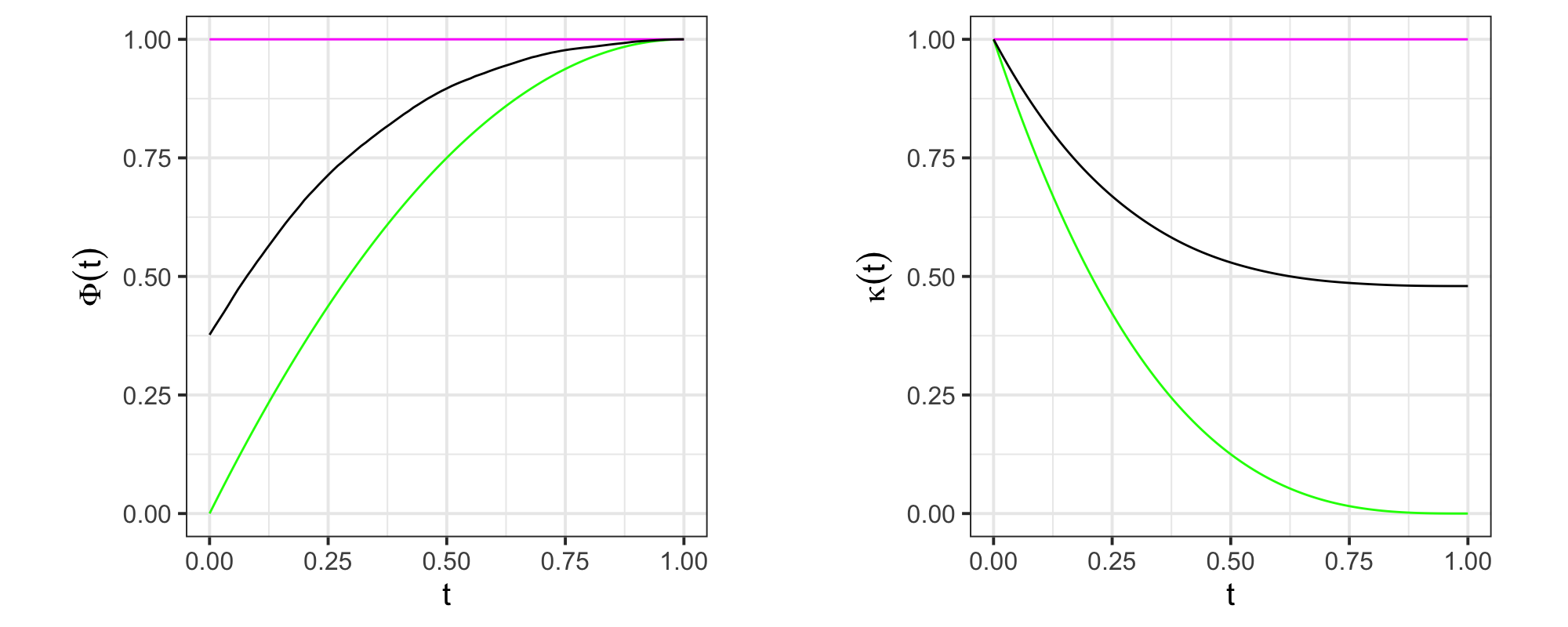}

    \caption{%
    Top: Samples of size $n = 10000$ drawn from a lower semilinear copula with
    $\delta$ given by linear interpolation of fixed support points, together with
    its Markov product.
    Bottom: Visualization of $\phi_{(Y,\mathbf{X})}$ and $\kappa_{(Y,\mathbf{X})}$
    for Example~\ref{LSLexample}.}
    \label{fig:LSL_combined}
\end{figure}
\end{example}

\section{Estimation} 

In this section, we study the asymptotic behavior of the estimators 
\(\hat{\phi}_{(Y,\mathbf X)}\) and \(\hat{\kappa}_{(Y,\mathbf X)}\). 
The general idea is closely related to \cite{fuchs2024JMVA}: 
the underlying Markov product distribution is approximated via a nearest-neighbor construction, which leads to a convenient stochastic representation of the estimator.

Let \(R_i\) denote the rank of \(Y_i\) among \(Y_1,\dots,Y_n\), where ties are resolved either by a fixed deterministic rule (for example, lexicographic ordering) or randomly. Furthermore, let \(N(i)\) denote the index of a nearest neighbor of \(\mathbf X_i\) among \(\{\mathbf X_j : j \neq i\}\) with respect to the Euclidean norm, where ties are again resolved according to a fixed rule.

Assume throughout that the conditions of Theorem~7 in \cite{fuchs2024JMVA}
are satisfied. In particular, \(F_Y\) is continuous,
\((Y_i,\mathbf X_i)_{i\ge 1}\) are i.i.d. observations with values in
\(\mathbb R\times\mathbb R^d\), nearest neighbors are constructed with respect
to the Euclidean norm on \(\mathbb R^d\), ties in ranks and nearest-neighbor
selection are treated according to the convention specified in
Theorem~7 of \cite{fuchs2024JMVA}, and the distribution of \(\mathbf X\)
satisfies the regularity assumptions required there for the almost sure weak
convergence of the empirical nearest-neighbor measure. All consistency
statements below are understood under these assumptions.

For \(t \in [0,1]\), define
\[
\hat{\phi}_{(Y,\mathbf{X})}(t)
:=
\frac{1}{n}\sum_{i=1}^n 
\mathbf{1}_{\left\{
\left|
\frac{R_i - R_{N(i)}}{n+1}
\right|
\le t
\right\}}.
\]

We first establish pointwise consistency of \(\hat{\phi}_{(Y,\mathbf X)}(t)\) at continuity points of the limit. We then strengthen this result to \(L^1\)-consistency. Finally, we discuss the boundary behavior at \(t=0\).

\begin{theorem}\label{thm:phi_pointwise}
Let $(Y,\mathbf X)$ be a $(d+1)$-dimensional random vector with copula $C$, and let
$(Y_1,\mathbf X_1),\dots,(Y_n,\mathbf X_n)$ be i.i.d.\ copies. Then, for every
$t\in[0,1]$ such that
\[
\PP(|V-V'|=t)=0,
\]
we have
\[
\hat{\phi}_{(Y,\mathbf X)}(t) \xrightarrow{a.s.} \phi_{(Y,\mathbf X)}(t),
\qquad (n\to\infty),
\]
where $V=F_Y(Y)$ and $V'=F_Y(Y')$.
\end{theorem}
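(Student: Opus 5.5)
The plan is to reduce the statement to the almost sure weak convergence of the empirical nearest-neighbor measure from Theorem~7 in \cite{fuchs2024JMVA}, and then apply the portmanteau theorem to the band $A_t$ from \eqref{T12}.

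\textbf{Step 1: the empirical Markov-product measure.} Define
\[
\hat\mu_n:=\frac1n\sum_{i=1}^n\delta_{\left(\frac{R_i}{n+1},\,\frac{R_{N(i)}}{n+1}\right)}.
\]
This is a random probability measure on $[0,1]^2$. Under the standing assumptions, Theorem~7 of \cite{fuchs2024JMVA} gives
\[
\hat\mu_n\to\mu:=\PP^{(V,V')}=\psi(C)\quad\text{weakly, almost surely}.
\]
If that theorem is phrased via a fixed class of functionals (for instance continuous test functions or the bivariate distribution function of the scaled rank pairs), I would first pass to weak convergence. Because the limit $\psi(C)$ is a copula with continuous marginals, pointwise convergence of distribution functions on a countable dense set suffices, and this holds on a single almost sure event.

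\textbf{Step 2: the plug-in identity.} By definition,
\[
\hat\phi_{(Y,\mathbf X)}(t)=\hat\mu_n(A_t),
\]
and by \eqref{T12}, $\phi_{(Y,\mathbf X)}(t)=\mu(A_t)$. The set $A_t$ is closed, and its boundary within $[0,1]^2$ is contained in $\{|u-v|=t\}$. The hypothesis $\PP(|V-V'|=t)=0$ therefore says exactly that $A_t$ is a $\mu$-continuity set.

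\textbf{Step 3: portmanteau.} Fix an $\omega$ in the almost sure event of Step~1. The portmanteau theorem then gives $\hat\mu_n(A_t)\to\mu(A_t)$, which is the claim. Alternatively, one can sandwich $\mathbf 1_{A_t}$ between continuous functions $g_\varepsilon\le \mathbf 1_{A_t}\le h_\varepsilon$ supported in $A_{t+\varepsilon}$ and $A_{t-\varepsilon}$. Letting $\varepsilon\downarrow0$ and using $\mu(A_{t+\varepsilon})-\mu(A_{t-\varepsilon})\to\PP(|V-V'|=t)=0$ gives the same conclusion.

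\textbf{Special case $t=0$.} Here $A_0$ is the diagonal, and the hypothesis forces $\phi_{(Y,\mathbf X)}(0)=0$. The argument is unchanged: the closed-set half of portmanteau gives $\limsup_n\hat\mu_n(A_0)\le\mu(A_0)=0$.

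\textbf{Main obstacle.} I expect the main difficulty to be Step~1, namely checking that the cited theorem really yields almost sure weak convergence of $\hat\mu_n$ with the rank normalisation $R_i/(n+1)$. If the reference is instead stated for pseudo-observations built from true $F_Y(Y_i)$, I would bridge the gap using the Glivenko--Cantelli theorem:
\[
\max_i\bigl|R_i/(n+1)-F_Y(Y_i)\bigr|\to0 \quad\text{almost surely}.
\]
This holds because $F_Y$ is continuous and ties occur with probability zero. A uniformly vanishing perturbation of the atoms preserves weak limits, so the two empirical measures have the same limit. The remaining steps are routine.
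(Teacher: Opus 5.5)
Your proposal is correct and takes essentially the same route as the paper. Both define the empirical nearest-neighbour measure of $\bigl(R_i/(n+1),R_{N(i)}/(n+1)\bigr)$, invoke Theorem~7 of \cite{fuchs2024JMVA} for its almost sure weak convergence to $\mathcal{L}(V,V')$, write $\hat{\phi}_{(Y,\mathbf X)}(t)=\mu_n(A_t)$ and $\phi_{(Y,\mathbf X)}(t)=\mu(A_t)$, and conclude by the portmanteau theorem since $\mu(\partial A_t)=\PP(|V-V'|=t)=0$. Your additional safeguards are sound details the paper leaves implicit: the passage from distribution-function convergence to weak convergence, the Glivenko--Cantelli bridge between normalized ranks and $F_Y(Y_i)$, and the separate treatment of $t=0$.
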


\begin{proof}
Define the empirical nearest-neighbor measure on $[0,1]^2$ by
\[
\mu_n
:=
\frac1n\sum_{i=1}^n
\delta_{\left(\frac{R_i}{n+1},\,\frac{R_{N(i)}}{n+1}\right)}.
\]
By Theorem~7 in \cite{fuchs2024JMVA}, we have weak convergence
\[
\mu_n \xrightarrow{a.s.} \mu := \mathcal L(V,V').
\]
Let
\[
A_t := \{(u,v)\in[0,1]^2 : |u-v|\le t\}.
\]
Then $\hat{\phi}_{(Y,\mathbf X)}(t)=\mu_n(A_t)$ and $\phi_{(Y,\mathbf X)}(t)=\mu(A_t)$.
Since $\partial A_t=\{(u,v):|u-v|=t\}$, the condition $\PP(|V-V'|=t)=0$ implies $\mu(\partial A_t)=0$. Hence, by the Portmanteau theorem,
\[
\mu_n(A_t)\xrightarrow{a.s.}\mu(A_t).
\]
\end{proof}

This can now be exploited to establish $L^1$ convergence in a straightforward manner.

\begin{theorem}\label{thm:phi_L1}
Let $(Y,\mathbf X)$ be a $(d+1)$-dimensional random vector with copula $C$, and let
$(Y_1,\mathbf X_1),\dots,(Y_n,\mathbf X_n)$ be i.i.d.\ copies. Then
\[
\int_0^1 \bigl|\hat{\phi}_{(Y,\mathbf X)}(t)-\phi_{(Y,\mathbf X)}(t)\bigr|\,dt
\xrightarrow{a.s.} 0.
\]
\end{theorem}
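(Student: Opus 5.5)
The plan is to deduce the $L^1$ statement from the pointwise result in Theorem~\ref{thm:phi_pointwise} by dominated convergence, working pathwise on a single probability-one event. The first step is to control the exceptional set of $t$. The function $\phi_{(Y,\mathbf X)}$ is the distribution function of $Z=|V-V'|$. Hence the set
\[
D:=\{t\in[0,1]:\PP(|V-V'|=t)>0\}
\]
of its atoms is at most countable, and in particular has Lebesgue measure zero.

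The second step handles the order of quantifiers. Theorem~\ref{thm:phi_pointwise} gives, for each fixed $t\notin D$, a null set $N_t$ outside of which $\hat\phi_{(Y,\mathbf X)}(t)\to\phi_{(Y,\mathbf X)}(t)$. Since $[0,1]\setminus D$ is uncountable, we cannot simply take the union of the $N_t$. I would avoid this issue by going back to the proof of Theorem~\ref{thm:phi_pointwise}. There, a single null set $N$ (from Theorem~7 in \cite{fuchs2024JMVA}) is fixed, outside of which $\mu_n\to\mu$ weakly. On $N^c$, the Portmanteau theorem gives $\mu_n(A_t)\to\mu(A_t)$ simultaneously for every $t\notin D$. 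So for every $\omega\notin N$ we have
\[
\hat\phi_{(Y,\mathbf X)}(t)\to\phi_{(Y,\mathbf X)}(t)\qquad\text{for all } t\in[0,1]\setminus D .
\]
Alternatively, one could use a countable dense set of continuity points together with the monotonicity of $\hat\phi$ and $\phi$ in $t$. This is the Pólya-type sandwich argument: convergence at a dense set of continuity points of a monotone limit yields convergence at every continuity point.

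The third step fixes $\omega\notin N$. The integrands $|\hat\phi_{(Y,\mathbf X)}(t)-\phi_{(Y,\mathbf X)}(t)|$ are bounded by $1$, since both functions take values in $[0,1]$. They converge to $0$ for Lebesgue-a.e.\ $t\in[0,1]$. Dominated convergence on $[0,1]$ with the constant dominating function $1$ then gives $\int_0^1|\hat\phi-\phi|\,dt\to 0$ for this $\omega$. Since $N$ is a null set, the convergence holds almost surely.

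The only delicate point is the second step: obtaining a common null set for all $t$ simultaneously. I expect it to be resolved cleanly by the fact that the almost sure weak convergence of $\mu_n$ is a single event.
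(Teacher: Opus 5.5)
Your proof is correct. It differs from the paper's only in how you exchange ``for each $t$, almost surely'' with ``almost surely, for a.e.\ $t$''.

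The paper treats Theorem~\ref{thm:phi_pointwise} as a black box. It forms the exceptional set $B=\{(\omega,t):\hat\phi_{(Y,\mathbf X)}(t,\omega)\not\to\phi_{(Y,\mathbf X)}(t)\}\subset\Omega\times[0,1]$. Every section $B_t$ with $\PP(|V-V'|=t)=0$ is $\PP$-null, and the remaining $t$ form a Lebesgue-null set. Tonelli then gives $(\PP\otimes\lambda)(B)=0$, hence an event of probability one on which convergence holds for Lebesgue-a.e.\ $t$. The dominated-convergence step is identical to your third step.

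The paper's Tonelli route is generic: it works for any random functions converging pointwise a.s.\ for a.e.\ $t$. However, it only delivers a.e.-$t$ convergence on the good event. It also tacitly needs joint measurability of $(\omega,t)\mapsto\hat\phi_{(Y,\mathbf X)}(t,\omega)$. This holds here because $\hat\phi_{(Y,\mathbf X)}$ is right-continuous in $t$, but the paper does not check it.

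Your main route gives a stronger conclusion: on a single event of probability one, convergence holds at \emph{every} continuity point of $\phi_{(Y,\mathbf X)}$, and no product-measurability question arises. The price is reopening the proof of Theorem~\ref{thm:phi_pointwise} and using that the almost sure weak convergence from \cite{fuchs2024JMVA} is one event. On the compact space $[0,1]^2$ this could in any case be arranged via a countable dense subset of $C([0,1]^2)$.

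Your P\'olya-type alternative keeps Theorem~\ref{thm:phi_pointwise} a black box, needed only at countably many points. It replaces Tonelli by the monotonicity of $\hat\phi_{(Y,\mathbf X)}$ and $\phi_{(Y,\mathbf X)}$ in $t$. That device is specific to distribution-function-type estimators, but it removes measurability concerns entirely.
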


\begin{proof}
By Theorem~\ref{thm:phi_pointwise}, we have
\[
\hat{\phi}_{(Y,\mathbf X)}(t)\to \phi_{(Y,\mathbf X)}(t)
\quad\text{a.s.}
\]
for every \(t\in[0,1]\) such that
\[
\PP(|V-V'|=t)=0.
\]

Now the function
\[
t\mapsto \PP(|V-V'|\le t)
\]
is a distribution function and hence monotone increasing. Therefore it has at most countably
many discontinuities. Since its discontinuities are precisely the points \(t\) for which
\[
\PP(|V-V'|=t)>0,
\]
the set
\[
D:=\{t\in[0,1]:\PP(|V-V'|=t)=0\}
\]
has full Lebesgue measure.

For \(t\in D\), Theorem~\ref{thm:phi_pointwise} yields
\[
\hat{\phi}_{(Y,\mathbf X)}(t)
\to
\phi_{(Y,\mathbf X)}(t)
\quad\text{almost surely}.
\]
Define
\[
B
:=
\left\{
(\omega,t)\in\Omega\times[0,1]:
\hat{\phi}_{(Y,\mathbf X)}(t,\omega)
\not\to
\phi_{(Y,\mathbf X)}(t)
\right\}.
\]
For every \(t\in D\), the section
\[
B_t
:=
\{\omega\in\Omega:(\omega,t)\in B\}
\]
has probability zero. Since \([0,1]\setminus D\) has Lebesgue measure zero, Tonelli's theorem gives
\[
(\mathbb P\otimes\lambda)(B)=0.
\]
Consequently, there exists a measurable set \(A\subset\Omega\) with
\(\mathbb P(A)=1\) such that, for every \(\omega\in A\),
\[
\hat{\phi}_{(Y,\mathbf X)}(t,\omega)
\to
\phi_{(Y,\mathbf X)}(t)
\]
for Lebesgue-a.e. \(t\in[0,1]\).

Moreover,
\[
0\le
\bigl|
\hat{\phi}_{(Y,\mathbf X)}(t,\omega)
-
\phi_{(Y,\mathbf X)}(t)
\bigr|
\le 1
\]
for all \(t\in[0,1]\) and all \(\omega\in A\). Therefore, by the dominated convergence theorem,
for every \(\omega\in A\),
\[
\int_0^1
\bigl|
\hat{\phi}_{(Y,\mathbf X)}(t,\omega)
-
\phi_{(Y,\mathbf X)}(t)
\bigr|
\,dt
\to 0.
\]
Since \(\PP(A)=1\), this proves
\[
\int_0^1
\bigl|
\hat{\phi}_{(Y,\mathbf X)}(t)
-
\phi_{(Y,\mathbf X)}(t)
\bigr|
\,dt
\xrightarrow{a.s.}0.
\]
\end{proof}
As a plug-in estimator for $\kappa_{(Y,\mathbf{X})}(t)$, we define

\begin{align}
\hat{\kappa}_{(Y,\mathbf{X})}(t) 
&:= 1 - 3\int_0^t \bigl(1-\hat{\phi}_{(Y,\mathbf{X})}(s)\bigr)\,\mathrm{d}s.
\end{align}

Based on this construction, consistency of $\hat{\kappa}_{(Y,\mathbf X)}(t)$ follows.

\begin{theorem}\label{thm:kappa}
Let $(Y,\mathbf X)$ be a $(d+1)$-dimensional random vector with copula $C$, and let
$(Y_1,\mathbf X_1),\dots,(Y_n\mathbf ,X_n)$ be i.i.d.\ copies. Then
\[
\sup_{t\in[0,1]}
\bigl|\hat{\kappa}_{(Y,\mathbf X)}(t)-\kappa_{(Y,\mathbf X)}(t)\bigr|
\xrightarrow{a.s.} 0.
\]
Moreover,
\[
\hat{\kappa}_{(Y,\mathbf X)}(t)
=
1-3t+\frac{3}{n}\sum_{i=1}^n \left(t-\left|\frac{R_i-R_{N(i)}}{n+1}\right|\right)_+ .
\]
\end{theorem}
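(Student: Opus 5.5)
The uniform consistency follows directly from Theorem~\ref{thm:phi_L1}. For every $t\in[0,1]$, subtracting the definitions gives
\[
\hat{\kappa}_{(Y,\mathbf X)}(t)-\kappa_{(Y,\mathbf X)}(t)
=3\int_0^t\bigl(\hat{\phi}_{(Y,\mathbf X)}(s)-\phi_{(Y,\mathbf X)}(s)\bigr)\,\mathrm{d}s .
\]
Hence
\[
\sup_{t\in[0,1]}\bigl|\hat{\kappa}_{(Y,\mathbf X)}(t)-\kappa_{(Y,\mathbf X)}(t)\bigr|
\le 3\int_0^1\bigl|\hat{\phi}_{(Y,\mathbf X)}(s)-\phi_{(Y,\mathbf X)}(s)\bigr|\,\mathrm{d}s .
\]
The bound on the right does not depend on $t$. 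By Theorem~\ref{thm:phi_L1} it converges to $0$ almost surely, on the same full-probability set $A$ constructed there. Measurability of the supremum causes no trouble: both $\hat\kappa$ and $\kappa$ are continuous in $t$ (they are integrals of bounded functions), so the supremum can be taken over rational $t$.

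For the closed form, I would write $D_i:=\bigl|\frac{R_i-R_{N(i)}}{n+1}\bigr|$, so that $\hat{\phi}_{(Y,\mathbf X)}(s)=\frac1n\sum_i\mathbf 1_{\{D_i\le s\}}$. Then
\[
1-\hat\phi_{(Y,\mathbf X)}(s)=\frac1n\sum_{i=1}^n\mathbf 1_{\{D_i>s\}} .
\]
By linearity and Fubini for finite sums,
\[
\int_0^t \mathbf 1_{\{D_i>s\}}\,\mathrm ds=\min(t,D_i)=t-(t-D_i)_+ ,
\]
which uses $D_i\ge 0$. Substituting this gives
\[
\hat\kappa_{(Y,\mathbf X)}(t)=1-\frac3n\sum_{i=1}^n\bigl(t-(t-D_i)_+\bigr)=1-3t+\frac3n\sum_{i=1}^n(t-D_i)_+ ,
\]
as claimed.

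I do not expect a real obstacle here. The only subtle point is that the uniform statement in $t$ comes for free from the $L^1$ result: the integral bound controls every $t$ at once, so no separate Glivenko--Cantelli-type argument is needed. The identity $\int_0^t\mathbf 1_{\{D>s\}}\,\mathrm ds=\min(t,D)$ is elementary, and it holds for every $t\in[0,1]$, including $t<D$ and $t\ge D$.
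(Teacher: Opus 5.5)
Your proof is correct and follows the paper's argument: you bound $\sup_{t}|\hat{\kappa}_{(Y,\mathbf X)}(t)-\kappa_{(Y,\mathbf X)}(t)|$ by $3\int_0^1|\hat{\phi}_{(Y,\mathbf X)}(s)-\phi_{(Y,\mathbf X)}(s)|\,\mathrm{d}s$, invoke Theorem~\ref{thm:phi_L1}, and get the closed form by integrating the indicators term by term. The one small difference is that you integrate $\mathbf 1_{\{D_i>s\}}$ to obtain $\min(t,D_i)$, whereas the paper integrates $\mathbf 1_{\{D_i\le s\}}$ to obtain $(t-D_i)_+$ directly; this is the same computation, and your measurability remark is a harmless addition.
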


\begin{proof}
We write
\[
\hat{\kappa}_{(Y,\mathbf X)}(t)
=
1-3t+3\int_0^t \hat{\phi}_{(Y,\mathbf X)}(s)\,ds,
\qquad
\kappa_{(Y,\mathbf X)}(t)
=
1-3t+3\int_0^t \phi_{(Y,\mathbf X)}(s)\,ds.
\]
Hence,
\[
\sup_{t\in[0,1]}
|\hat{\kappa}_{(Y,\mathbf X)}(t)-\kappa_{(Y,\mathbf X)}(t)|
\le
3\int_0^1 |\hat{\phi}_{(Y,\mathbf X)}(s)-\phi_{(Y,\mathbf X)}(s)|\,ds.
\]
The claim follows from Theorem~\ref{thm:phi_L1}.

For the explicit representation, we start from
\[
\hat{\phi}_{(Y,\mathbf X)}(s)
=
\frac1n\sum_{i=1}^n \mathbf 1_{\left\{\left|\frac{R_i-R_{N(i)}}{n+1}\right|\le s\right\}}.
\]
Hence,
\begin{align*}
\int_0^t \hat{\phi}_{(Y,\mathbf X)}(s)\,ds
&=
\frac1n\sum_{i=1}^n \int_0^t \mathbf 1_{\left\{\left|\frac{R_i-R_{N(i)}}{n+1}\right|\le s\right\}}\,ds.
\end{align*}
For each fixed $i$, the integrand is zero for $s<\left|\frac{R_i-R_{N(i)}}{n+1}\right|$ and one otherwise, hence
\[
\int_0^t \mathbf 1_{\left\{\left|\frac{R_i-R_{N(i)}}{n+1}\right|\le s\right\}}\,ds
=
\left(t-\left|\frac{R_i-R_{N(i)}}{n+1}\right|\right)_+.
\]
This yields the claimed representation.
\end{proof}

\begin{cor}
Under the assumptions of Theorem~\ref{thm:kappa}, for every $t\in[0,1]$,
\[
\hat{\kappa}_{(Y,\mathbf X)}(t) \xrightarrow{a.s.} \kappa_{(Y,\mathbf X)}(t).
\]
\end{cor}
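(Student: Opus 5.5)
The corollary is a direct specialization of the uniform statement in Theorem~\ref{thm:kappa}, so the plan is to reduce it to that theorem rather than reprove anything.

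Fix $t\in[0,1]$. For every $\omega$ we have the trivial inequality
\[
\bigl|\hat{\kappa}_{(Y,\mathbf X)}(t)(\omega)-\kappa_{(Y,\mathbf X)}(t)\bigr|
\le
\sup_{s\in[0,1]}\bigl|\hat{\kappa}_{(Y,\mathbf X)}(s)(\omega)-\kappa_{(Y,\mathbf X)}(s)\bigr|.
\]
By Theorem~\ref{thm:kappa}, the right-hand side tends to zero on an event $A$ with $\PP(A)=1$. This event can be taken as the one from the proof of Theorem~\ref{thm:phi_L1}, and it does not depend on $t$. Hence the left-hand side tends to zero on $A$ as well, which is exactly the almost sure convergence claimed. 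In fact the null set is the same for all $t$ simultaneously, which is slightly stronger than required.

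The only point needing care is measurability of the supremum, so that "$\xrightarrow{a.s.}$" is meaningful. We can avoid this issue entirely by working pointwise in $\omega$ on the event $A$, as above. If one prefers a measurable supremum, continuity of both $\hat\kappa$ and $\kappa$ in $t$ lets us restrict the supremum to rational $t$. For $\kappa$ this continuity is the Lipschitz property from Lemma~\ref{lemam2.4}. For $\hat\kappa$ it follows from the explicit representation, which is Lipschitz with constant $3$.

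There is no real obstacle here. Note in particular that, unlike Theorem~\ref{thm:phi_pointwise}, no continuity condition at $t$ is needed, because integrating $\hat\phi$ removes the effect of jumps of $\phi_{(Y,\mathbf X)}$. This matters for $t=0$, where $\phi_{(Y,\mathbf X)}$ may jump, yet $\hat\kappa(0)=\kappa(0)=1$ holds trivially.
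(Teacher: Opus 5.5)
Your proposal is correct and follows the paper's route. The paper gives no separate proof, since the corollary follows immediately from the uniform almost sure convergence in Theorem~\ref{thm:kappa} through the bound $|\hat{\kappa}_{(Y,\mathbf X)}(t)-\kappa_{(Y,\mathbf X)}(t)|\le\sup_{s\in[0,1]}|\hat{\kappa}_{(Y,\mathbf X)}(s)-\kappa_{(Y,\mathbf X)}(s)|$, which is exactly your argument. Your extra remarks are accurate but not needed: the null set is common to all $t$, the functions are $3$-Lipschitz so the supremum is measurable, and the case $t=0$ is trivial.
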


The final point that requires separate discussion is the boundary at $t=0$, since this value is of particular interest. However, due to the construction of the estimator, nearest-neighbor rank differences are always strictly positive and in fact bounded below by $(n+1)^{-1}$, the minimal spacing of normalized ranks.

\begin{proposition}\label{prop:bn}
There exists a deterministic sequence $(b_n)$ such that
\[
b_n\downarrow 0,
\qquad
b_n\ge \frac{1}{n+1},
\]
and
\[
\hat{\phi}_{(Y,\mathbf X)}(b_n)\xrightarrow{a.s.}\phi_{(Y,\mathbf X)}(0).
\]
\end{proposition}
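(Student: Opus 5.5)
The plan is a diagonal argument. I would combine the almost sure weak convergence $\mu_n\to\mu=\mathcal L(V,V')$ used in the proof of Theorem~\ref{thm:phi_pointwise} with right-continuity of $\phi_{(Y,\mathbf X)}$ at $0$. The sequence $b_n$ will be deterministic and piecewise constant, and Egorov's theorem together with Borel--Cantelli will remove the dependence on $\omega$. First I fix a strictly decreasing sequence $t_k\downarrow 0$ with $t_k\in D=\{t:\PP(|V-V'|=t)=0\}$. This is possible because $D$ has full measure, as shown in the proof of Theorem~\ref{thm:phi_L1}. By right-continuity, $\phi_{(Y,\mathbf X)}(t_k)\le \phi_{(Y,\mathbf X)}(0)+\varepsilon_k$ with $\varepsilon_k\to 0$.

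For the two bounds I use two facts for each fixed $k$. The upper bound comes from Theorem~\ref{thm:phi_pointwise}, which gives $\hat\phi_{(Y,\mathbf X)}(t_k)\to\phi_{(Y,\mathbf X)}(t_k)$ almost surely. The lower bound comes from the Portmanteau theorem applied to the open band $O_k:=\{(u,v):|u-v|<t_k\}$. Since $O_k\supset A_0$, it gives $\liminf_n \mu_n(O_k)\ge\mu(O_k)\ge\mu(A_0)=\phi_{(Y,\mathbf X)}(0)$ almost surely. Monotonicity in $t$ then gives $\mu_n(O_k)\le\hat\phi_{(Y,\mathbf X)}(t_k)$. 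So whenever $b_n=t_k$, the value $\hat\phi_{(Y,\mathbf X)}(b_n)$ is squeezed between $\mu_n(O_k)$ and $\hat\phi_{(Y,\mathbf X)}(t_k)$. Writing $\phi=\phi_{(Y,\mathbf X)}$, it remains to make
\[
\mu_n(O_k)\ge \phi(0)-\tfrac1k
\quad\text{and}\quad
\hat\phi_{(Y,\mathbf X)}(t_k)\le\phi(t_k)+\tfrac1k
\]
hold for all $n$ in the block where $b_n=t_k$.

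The main obstacle is that the a.s. convergences have $\omega$-dependent rates, while $b_n$ must be deterministic. I would handle this as follows. For each $k$, the random variables
\[
\sup_{m\ge n}\bigl(\hat\phi_{(Y,\mathbf X)}(t_k)-\phi(t_k)\bigr)_+
\quad\text{and}\quad
\sup_{m\ge n}\bigl(\phi(0)-\mu_m(O_k)\bigr)_+
\]
decrease to $0$ almost surely, with the convergences quantified over $m\ge n$. Hence, by Egorov's theorem, or simply by continuity of probability, there is a deterministic $N_k$ such that the event
\[
E_k:=\Bigl\{\text{for all } m\ge N_k:\ \hat\phi_{(Y,\mathbf X)}(t_k)\le\phi(t_k)+\tfrac1k,\ \ \mu_m(O_k)\ge\phi(0)-\tfrac1k\Bigr\}
\]
(with $\hat\phi_{(Y,\mathbf X)}$ evaluated at sample size $m$) has $\PP(E_k)\ge 1-2^{-k}$. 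I choose the $N_k$ strictly increasing, and additionally $N_k\ge 1/t_k$. Then I set $b_n:=t_k$ for $N_k\le n<N_{k+1}$, and $b_n:=t_1$ for $n<N_1$, enlarging $t_1$ beforehand so that $t_1\ge 1/2$.

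With this choice $b_n\downarrow0$, and the condition $N_k\ge 1/t_k$ gives $b_n\ge t_k\ge 1/n\ge 1/(n+1)$ on each block. By Borel--Cantelli, almost every $\omega$ lies in $E_k$ for all $k\ge k_0(\omega)$. For such $\omega$ and $N_k\le n<N_{k+1}$ with $k\ge k_0$, the squeeze gives
\[
\phi(0)-\tfrac1k\;\le\;\hat\phi_{(Y,\mathbf X)}(b_n)\;\le\;\phi(0)+\varepsilon_k+\tfrac1k .
\]
Letting $n\to\infty$, so that $k\to\infty$, yields $\hat\phi_{(Y,\mathbf X)}(b_n)\to\phi_{(Y,\mathbf X)}(0)$ almost surely.
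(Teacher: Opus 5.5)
Your proof is correct and takes essentially the same route as the paper. Both arguments pick continuity points $t_k\downarrow 0$, obtain a deterministic $N_k$ from the tail supremum of the pointwise a.s.\ convergence with failure probability $2^{-k}$, set $b_n$ constant on the blocks $[N_k,N_{k+1})$, and conclude with Borel--Cantelli. Your open-band Portmanteau lower bound is valid but unnecessary, because Theorem~\ref{thm:phi_pointwise} already gives two-sided convergence $\hat\phi_{(Y,\mathbf X)}(t_k)\to\phi_{(Y,\mathbf X)}(t_k)\ge\phi_{(Y,\mathbf X)}(0)$. Your treatment of $n<N_1$ and of $b_n\ge 1/(n+1)$ is slightly more careful than the paper's.
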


\begin{proof}
Since $\phi_{(Y,\mathbf X)}$ is a distribution function, it is right-continuous and has at most countably many discontinuities. Hence, there exists a sequence of continuity points $(q_m)_{m\in\mathbb N}$ such that
\[
q_m\downarrow 0
\qquad\text{and}\qquad
\phi_{(Y,\mathbf X)}(q_m)\to \phi_{(Y,\mathbf X)}(0).
\]

By Theorem~\ref{thm:phi_pointwise}, for each fixed $m$,
\[
\hat{\phi}_{(Y,\mathbf X)}(q_m)\xrightarrow{a.s.}\phi_{(Y,\mathbf X)}(q_m).
\]
Hence,
\[
\sup_{n\ge N} \left|\hat{\phi}_{(Y,\mathbf X)}(q_m)-\phi_{(Y,\mathbf X)}(q_m)\right|
\xrightarrow{a.s.} 0 \quad (N\to\infty).
\]
In particular, we can choose a deterministic increasing sequence $(N_m)$ such that
\[
q_m \ge \frac{1}{N_m+1},
\qquad
\PP\!\left(
\sup_{n\ge N_m}
\left|\hat{\phi}_{(Y,\mathbf X)}(q_m)-\phi_{(Y,\mathbf X)}(q_m)\right|>\frac{1}{m}
\right)
\le 2^{-m}.
\]

Define $(b_n)$ by
\[
b_n := q_m
\qquad\text{whenever } N_m \le n < N_{m+1}.
\]
Then $b_n\downarrow 0$ and $b_n\ge \frac{1}{n+1}$.

By the Borel--Cantelli lemma, almost surely only finitely many of the events
\[
\sup_{n\ge N_m}
\left|\hat{\phi}_{(Y,\mathbf X)}(q_m)-\phi_{(Y,\mathbf X)}(q_m)\right|>\frac{1}{m}
\]
occur. Hence, almost surely, for all sufficiently large $m$ and all $n\ge N_m$,
\[
\left|\hat{\phi}_{(Y,\mathbf X)}(q_m)-\phi_{(Y,\mathbf X)}(q_m)\right|\le \frac{1}{m}.
\]

For $n$ with $N_m \le n < N_{m+1}$, we obtain
\[
\left|\hat{\phi}_{(Y,\mathbf X)}(b_n)-\phi_{(Y,\mathbf X)}(0)\right|
\le
\frac{1}{m}
+
\left|\phi_{(Y,\mathbf X)}(q_m)-\phi_{(Y,\mathbf X)}(0)\right|.
\]
Both terms converge to zero, which proves the claim.
\end{proof}

\begin{remark}
In practice, one may use $b_n = 1/n$ as a natural near-zero threshold. However, the proposition only guarantees the existence of at least one deterministic sequence $b_n\downarrow 0$ with the stated properties.
\end{remark}

\section{Data Examples and Simulations}
This section applies the previously introduced copula-based dependence measures in practice. 
The focus is on the function $\phi_{(Y,\mathbf{X})}(t)$, particularly at $t=0$, where positive values indicate a singular component and thus potential functional relationships between variables.

A wine dataset illustrates this: observed patterns suggest partial determinism between variables, reflected in a small but positive estimate of $\phi(0)$. A simulation study further shows that the estimators for $\phi_{(Y,X)}$ and its integral $\kappa$ are consistent, with $\kappa$ exhibiting lower median error but higher variability due to its smoothing nature.

\subsection{Data Examples}

As discussed in the theoretical part, the behavior of
\(\phi_{(Y,\XX)}\) near zero can also be used as a diagnostic tool for
empirical data. For variables that are naturally interpreted as continuous
measurements, one would typically not expect a pronounced concentration of
the Markov-product representation near the diagonal at very small thresholds.
A clearly positive near-zero value of \(\phi_{(Y,\XX)}\) may therefore point
to structural features of the data, such as preprocessing effects,
duplicates, or synthetic augmentation.

\begin{example}[Wine-quality data]

We analyze the current publicly available Kaggle balanced-classification
version of the Wine Quality data \cite{KaggleWine}, derived from the original
UCI/Cortez Wine Quality data \cite{UCIWineQuality}. The Kaggle data set used
here contains 21,000 observations and combines original observations with
synthetically generated ones. The original UCI/Cortez data consist of 1,599
red-wine and 4,898 white-wine observations. In the present example, we consider
\[
\XX=\texttt{total\_sulfur\_dioxide},
\qquad
Y=\texttt{sulphates}.
\]
Both variables are physicochemical measurements and are treated as continuous
quantities for the purpose of the present analysis.

In the ordinary scatterplot, the relationship between these variables does
not suggest an obvious deterministic or nearly deterministic structure. The
point cloud shows clustering and heterogeneity, but no visible functional
relation. In contrast, the corresponding Markov-product representation
exhibits a clear near-diagonal concentration; see
Figure~\ref{fig:wine_markov_product}.

\begin{figure}[ht]
\centering
\includegraphics[width=0.8\textwidth]{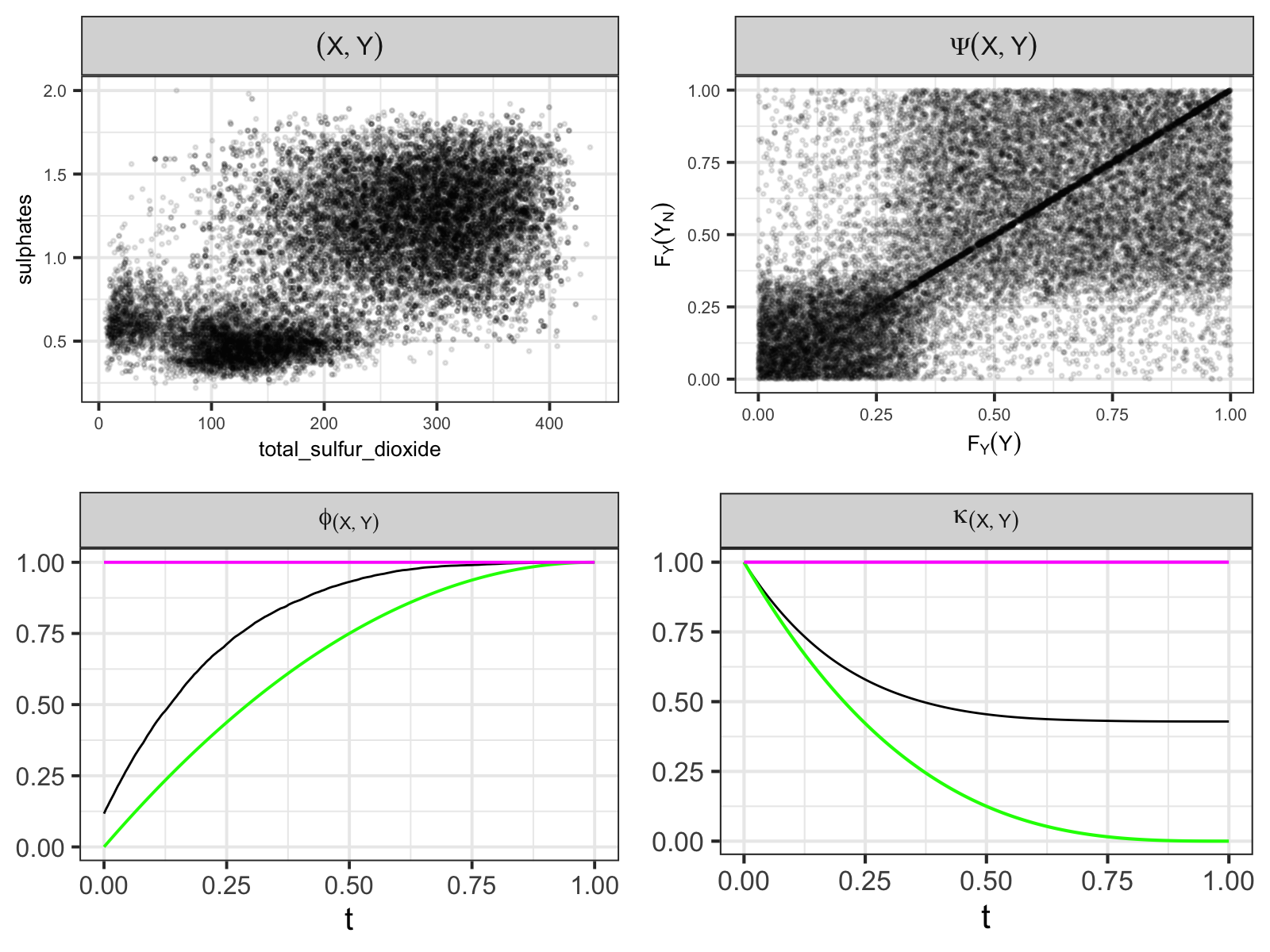}
\caption{Wine-quality data for \texttt{total\_sulfur\_dioxide} and
\texttt{sulphates}. The upper-left panel shows the original scatterplot,
while the upper-right panel shows the corresponding Markov-product
representation. The lower panels display the empirical
\(\phi_{(Y,\XX)}\)- and \(\kappa_{(Y,\XX)}\)-functions.}
\label{fig:wine_markov_product}
\end{figure}

This concentration is detected by \(\hat{\phi}_{(Y,\XX)}\)
near zero. Since the rank-based finite-sample estimator uses nearest-neighbor
indices with \(N(i)\neq i\), we report the value at the small threshold
\(b_n = 1/n\). In the present example,
\[
\hat{\kappa}_{(Y,\XX)}(1)\approx 0.43,
\qquad
\hat{\phi}_{(Y,\XX)}(b_n)\approx 0.09.
\]
Thus, \(\hat{\kappa}_{(Y,\XX)}(1)\) indicates a moderate level of directed
dependence, while \(\hat{\phi}_{(Y,\XX)}(b_n)\) quantifies the visible
near-diagonal concentration in the empirical Markov-product representation.

The source of this effect can be further assessed by comparing the Kaggle data
with the original UCI/Cortez data. The Kaggle file used here is a derived data
set and, according to its description, combines original observations with
synthetically generated ones to obtain a balanced classification data set.

A separate analysis of the original UCI/Cortez data supports this
interpretation. When the same variables are analyzed in the original data
alone, the corresponding near-zero value of the empirical
\(\phi_{(Y,\XX)}\)-function is close to zero, as expected for the selected
continuous measurements. The positive near-zero value in the Kaggle version
therefore suggests that the effect is related to structural features of the
derived dataset, such as synthetic augmentation, preprocessing, or duplication
effects, rather than to the original wine measurements alone.

Thus, the example illustrates how \(\phi_{(Y,\XX)}\) can reveal structural
features of empirical data that are not directly visible in the ordinary
coordinate representation.

\end{example}

\subsection*{Simulation study}

We now analyze the convergence behavior of the dependence function
\(\phi_{(Y,\XX)}\) and the associated area measure
\(\kappa_{(Y,\XX)}\). We simulate data from a bivariate Gaussian copula with
correlation parameter \(\rho=0.5\), that is, \(d=1\); see
Example~\ref{F_Yaus} for reference.

\textbf{Setup:} Let
\[
G:=\{0,0.01,\ldots,1\}
\]
be a uniform grid with 101 points in \([0,1]\). For each sample size \(n\), we
compute \(t\mapsto \hat{\phi}_{(Y,\XX)}(t)\) and
\(t\mapsto \hat{\kappa}_{(Y,\XX)}(t)\) based on 500 independent repetitions.
These are compared to the reference curves \(\phi_{(Y,\XX)}(t)\) and
\(\kappa_{(Y,\XX)}(t)\).

\textbf{Metric:} We measure the grid-based maximum absolute deviation by
\[
d_G(\phi_{(Y,\XX)},\hat{\phi}_{(Y,\XX)})
:=\max_{t\in G}
\left|\phi_{(Y,\XX)}(t)-\hat{\phi}_{(Y,\XX)}(t)\right|,
\]
and
\[
d_G(\kappa_{(Y,\XX)},\hat{\kappa}_{(Y,\XX)})
:=\max_{t\in G}
\left|\kappa_{(Y,\XX)}(t)-\hat{\kappa}_{(Y,\XX)}(t)\right|.
\]

\begin{figure}[h]
  \centering
  \includegraphics[width=0.85\textwidth]{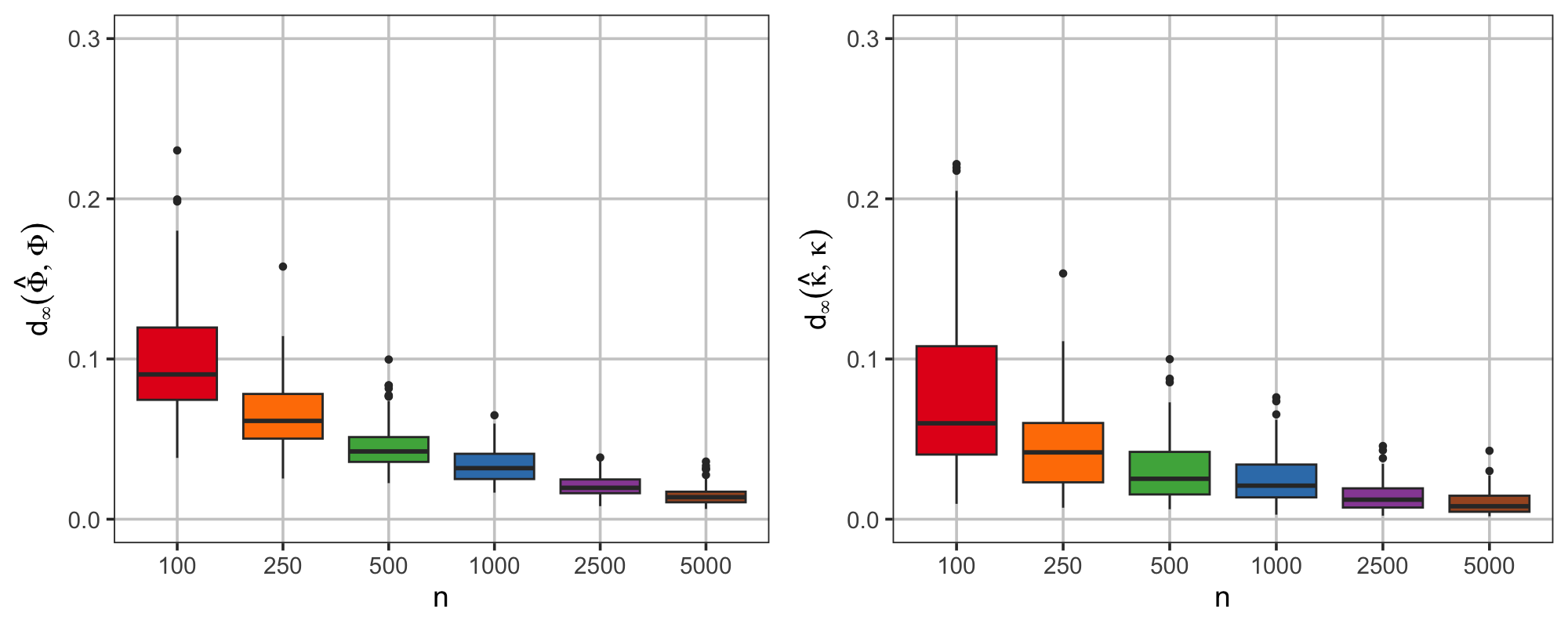}
  \caption{Boxplots of the grid-based maximum deviation
  \(d_G(\phi_{(Y,\XX)},\hat{\phi}_{(Y,\XX)})\) (left) and
  \(d_G(\kappa_{(Y,\XX)},\hat{\kappa}_{(Y,\XX)})\) (right) across different
  sample sizes \(n\). Each box is based on 500 independent repetitions.}
  \label{fig:convergence_behavior}
\end{figure}

In this continuous Gaussian setting, the grid-based maximum deviations decrease
empirically with increasing sample size. This is consistent with the theoretical
convergence results, while noting that the paper proves \(L^1\)-consistency for
\(\hat{\phi}_{(Y,\XX)}\) and uniform consistency for
\(\hat{\kappa}_{(Y,\XX)}\), rather than general uniform consistency for
\(\hat{\phi}_{(Y,\XX)}\).

Notably, the median deviation is systematically smaller for
\(\kappa_{(Y,\XX)}\) than for \(\phi_{(Y,\XX)}\), whereas the variance is larger
for the \(\kappa_{(Y,\XX)}\)-based measure. This can be attributed to the
integral structure of \(\hat{\kappa}_{(Y,\XX)}\): since
\(\hat{\kappa}_{(Y,\XX)}(t)\) is defined as an integral of
\(\hat{\phi}_{(Y,\XX)}\) from \(0\) to \(t\), local over- or underestimations of
\(\hat{\phi}_{(Y,\XX)}\) relative to \(\phi_{(Y,\XX)}(s)\) are partially
compensated as the integration progresses. This averaging effect---together
with the constant scaling factor---leads to smaller mean or median deviations.
However, if such compensation occurs only at larger values of \(t\), the
accumulated bias results in increased variability. This may explain why
\(\hat{\kappa}_{(Y,\XX)}\) exhibits lower median error but higher variance
compared to the pointwise estimator \(\hat{\phi}_{(Y,\XX)}\).

\section*{Conclusion}

In this paper, we developed a distribution-sensitive extension of Chatterjee’s correlation coefficient by analyzing the Markov product $(Y, Y')$ and introducing the dependence functions $\phi_{(Y,\XX)}(t)$ and $\kappa_{(Y,\XX)}(t)$. Rather than summarizing directed dependence by a single scalar, the proposed framework provides a functional description that captures both local and global structural features of the relationship between $Y$ and $\XX$. In particular, $\phi_{(Y,\XX)}(0)$ detects conditional atomic components in the Markov product. Such components may arise from functional, duplicated, discretized, preprocessed, or synthetically generated structure; they should therefore not be interpreted automatically as direct evidence of a deterministic physical relation. The function $\kappa_{(Y,\XX)}$ aggregates this information into a convex and well-behaved curve that generalizes Chatterjee’s $\xi$.

The theoretical analysis establishes sharp characterizations of independence and perfect dependence, clarifies the geometric interpretation of mass concentration along the diagonal after the Markov transformation, and demonstrates how different copula structures are reflected in the shape of $\phi_{(Y,\XX)}$ and $\kappa_{(Y,\XX)}$. This shows that dependence is not only a matter of strength but also of structural form.

From a statistical perspective, we proposed plug-in estimators based on nearest-neighbor ranks and proved strong consistency via weak convergence of empirical measures. Simulation results confirm the theoretical convergence behavior and illustrate the distinct error profiles of the pointwise estimator $\hat{\phi}_{(Y,\XX)}$ and the integrated estimator $\hat{\kappa}_{(Y,\XX)}$.

Overall, the framework extends scalar dependence measures to a richer functional object, improving interpretability and enabling a more nuanced analysis of directed stochastic dependence.
\newpage
\section*{Funding information}
This research was funded in whole by the Austrian Science Fund (FWF) [10.55776/P36155] project ReDim: Quantifying Dependence via Dimension Reduction. The author further acknowledges the support from the WISS 2025 project ‘IDA-lab Salzburg’ 20204-WISS/225/197–2019 and 20102-F1901166-KZP.

\section*{Author contributions}
The author confirms sole responsibility for the conception of the study,
the presented results, and the preparation of the manuscript.

\section*{Conflict of interest}
The author states no conflict of interest.

\section*{Ethical approval}
This research does not involve human participants, animals, or clinical data.

\section*{Data availability statement}

The data used in this study are publicly available. The empirical analysis
is based on a derived Kaggle version of the Wine Quality data available at \cite{KaggleWine}. This dataset is not identical to the original UCI/Cortez Wine
Quality data. The original Wine Quality data are available from the UCI
Machine Learning Repository and were introduced in Cortez et al.
\cite{UCIWineQuality}.

\section*{Declaration on the use of artificial intelligence}
ChatGPT was used for language editing, proofreading,
and formulation support during the preparation of the manuscript. The
mathematical results, conceptual development, data analysis, figures, and
scientific conclusions were produced and verified by the author. The author takes full responsibility for the content of the manuscript.



\begin{thebibliography}{99}

\bibitem{ansari2025continuity}
J.~Ansari and S.~Fuchs (2025).
\newblock On continuity of Chatterjee's rank correlation and related dependence measures.
\newblock \emph{Bernoulli}, to appear.
\newblock arXiv:2503.11390.
\newblock \url{https://arxiv.org/abs/2503.11390}.

\bibitem{ansari2026ordering}
J.~Ansari and S.~Fuchs (2025).
\newblock An ordering for the strength of functional dependence.
\newblock arXiv:2511.06498.
\newblock \url{https://arxiv.org/abs/2511.06498}.

\bibitem{deb2020chatterjee}
N.~Deb, P.~Ghosal, and B.~Sen (2020).
\newblock Measuring association on topological spaces using kernels and geometric graphs.
\newblock arXiv:2010.01768.
\newblock \url{https://arxiv.org/abs/2010.01768}.

\bibitem{tran2024rank}
L.~Tran and F.~Han (2024).
\newblock On a rank-based Azadkia--Chatterjee correlation coefficient.
\newblock arXiv:2412.02668.
\newblock \url{https://arxiv.org/abs/2412.02668}.

\bibitem{dalitz2024bias}
C.~Dalitz, J.~Arning, and S.~Goebbels (2024).
\newblock A simple bias reduction for Chatterjee's correlation.
\newblock \emph{J. Stat. Theory Pract.} \textbf{18}, Article 51.
\newblock DOI: \url{https://doi.org/10.1007/s42519-024-00399-y}.

\bibitem{lin2023boosting}
Z.~Lin and F.~Han (2023).
\newblock On boosting the power of Chatterjee's rank correlation.
\newblock \emph{Biometrika} \textbf{110}(2), 283--299.
\newblock DOI: \url{https://doi.org/10.1093/biomet/asac048}.

\bibitem{shi2021azadkia}
H.~Shi, M.~Drton, and F.~Han (2024).
\newblock On Azadkia--Chatterjee's conditional dependence coefficient.
\newblock \emph{Bernoulli} \textbf{30}(2), 851--877.
\newblock DOI: \url{https://doi.org/10.3150/22-BEJ1529}.

\bibitem{ansari2022extension}
J.~Ansari and S.~Fuchs (2022).
\newblock A direct extension of Azadkia \& Chatterjee's rank correlation to multi-response vectors.
\newblock arXiv:2212.01621.
\newblock \url{https://arxiv.org/abs/2212.01621}.

\bibitem{huang2023multivariate}
W.~Huang, Z.~Li, and Y.~Wang (2025).
\newblock A multivariate extension of Azadkia--Chatterjee's rank coefficient.
\newblock arXiv:2512.07443.
\newblock \url{https://arxiv.org/abs/2512.07443}.

\bibitem{azadkia2021simple}
M.~Azadkia and S.~Chatterjee (2021).
\newblock A simple measure of conditional dependence.
\newblock \emph{Ann. Stat.} \textbf{49}(6), 3070--3102.
\newblock DOI: \url{https://doi.org/10.1214/21-AOS2073}.

\bibitem{fuchs2026dimension}
S.~Fuchs and C.~Limbach (2026).
\newblock A dimension reduction for extreme types of directed dependence.
\newblock \emph{Dependence Modeling} \textbf{14}(1), Article 20250016.
\newblock DOI: \url{https://doi.org/10.1515/demo-2025-0016}.

\bibitem{fuchs2024JMVA}
S.~Fuchs (2024).
\newblock Quantifying directed dependence via dimension reduction.
\newblock \emph{J. Multivar. Anal.} \textbf{201}, Article 105266.
\newblock DOI: \url{https://doi.org/10.1016/j.jmva.2023.105266}.

\bibitem{durante2015principles}
F.~Durante and C.~Sempi (2015).
\newblock \emph{Principles of Copula Theory}.
\newblock Boca Raton, FL: Chapman and Hall/CRC.

\bibitem[\protect\citeauthoryear{Nelsen}{2006}]{nelsen2006}
R.~B. Nelsen (2006).
\newblock \emph{An Introduction to Copulas}.
\newblock 2nd ed., Springer.

\bibitem{fuchs2026semilinear}
S.~Fuchs, C.~Limbach, and F.~Sch\"urrer (2026).
\newblock On exact regions between measures of concordance and Chatterjee's rank correlation for lower semilinear copulas.
\newblock \emph{Int. J. Approx. Reason.} \textbf{189}, Article 109588.
\newblock DOI: \url{https://doi.org/10.1016/j.ijar.2025.109588}.

\bibitem{KaggleWine}
Taweilo (n.d.).
\newblock Wine Quality dataset -- Classification.
\newblock Kaggle dataset.
\newblock \url{https://www.kaggle.com/datasets/taweilo/wine-quality-dataset-balanced-classification}.

\bibitem{UCIWineQuality}
P.~Cortez, A.~Cerdeira, F.~Almeida, T.~Matos, and J.~Reis (2009).
\newblock Modeling wine preferences by data mining from physicochemical properties.
\newblock \emph{Decision Support Systems} \textbf{47}(4), 547--553.
\newblock DOI: \url{https://doi.org/10.1016/j.dss.2009.05.016}.

\end{thebibliography}

\newpage
\appendix
\section{Proofs and additional results}\label{sec:appendix}

\subsection*{Proof of Theorem 1.1}
\begin{proof}

By results in the literature \cite{ansari2025continuity}, we have
\[
\xi(Y,\mathbf X)
=
6\int_{\R}\PP(Y\ge y,Y'\ge y)\,d\PP^Y(y)-2.
\]

Using the decomposition
\[
\PP(Y\ge y,Y'\ge y)
=
1-\PP(Y<y)-\PP(Y'<y)+\PP(Y<y,Y'<y),
\]
and the fact that \(Y\) and \(Y'\) have the same continuous marginal distribution, we obtain
\begin{align*}
\int_{\R}\PP(Y\ge y,Y'\ge y)\,d\PP^Y(y)
&=
\int_{\R}
\Bigl(
1-2\PP(Y<y)+\PP(Y<y,Y'<y)
\Bigr)\,d\PP^Y(y) \\
&=
1-2\int_{\R}\PP(Y<y)\,d\PP^Y(y)
+
\int_{\R}\PP(Y<y,Y'<y)\,d\PP^Y(y) \\
&=
\int_{\R}\PP(Y<y,Y'<y)\,d\PP^Y(y),
\end{align*}
because, by continuity of \(Y\),
\[
\int_{\R}\PP(Y<y)\,d\PP^Y(y)
=
\frac12.
\]

Now let \(V=F_Y(Y)\) and \(V'=F_Y(Y')\). By continuity of \(F_Y\), both \(V\) and \(V'\) are uniformly distributed on \((0,1)\). Using the change of variables \(t=F_Y(y)\), we get
\[
\int_{\R}\PP(Y<y,Y'<y)\,d\PP^Y(y)
=
\int_0^1 \PP(V<t,V'<t)\,dt.
\]
By Fubini's theorem,
\[
\int_0^1 \PP(V<t,V'<t)\,dt
=
\mathbb E\left[\int_0^1 \mathds{1}_{\{V<t,V'<t\}}\,dt\right]
=
\mathbb E\left[\int_0^1 \mathds{1}_{\{t>\max(V,V')\}}\,dt\right]
=
\mathbb E[1-\max(V,V')].
\]
Since $V$ and $V'$ are uniformly distributed,
\[
\mathbb E[1-\max(V,V')]
=
\mathbb E[\min(V,V')],
\]
because
\[
\min(V,V')+\max(V,V')=V+V'
\quad\text{and}\quad
\mathbb E[V+V']=1.
\]
Thus
\[
\int_{\R}\PP(Y\ge y,Y'\ge y)\,d\PP^Y(y)
=
\mathbb E[\min(V,V')].
\]

Since
\[
\min(V,V')
=
V-(V-V')_+,
\]
we obtain
\[
\mathbb E[\min(V,V')]
=
\mathbb E[V]-\mathbb E[(V-V')_+]
=
\frac12-\mathbb E[(V-V')_+].
\]
If \((Y,Y')\) is exchangeable, then \((V,V')\) is exchangeable as well, and hence
\[
\mathbb E[(V-V')_+]
=
\mathbb E[(V'-V)_+]
=
\frac12\mathbb E|V-V'|.
\]
Therefore,
\[
\int_{\R}\PP(Y\ge y,Y'\ge y)\,d\PP^Y(y)
=
\frac12-\frac12\mathbb E|V-V'|.
\]

Plugging this into the representation of \(\xi\) yields
\[
\xi(Y,\mathbf X)
=
3\Bigl(\mathbb E(1-|V-V'|)\Bigr)-2.
\]

Finally, since \(0\le |V-V'|\le 1\), Fubini's theorem gives
\[
\int_0^1 \PP(|V-V'|\le s)\,ds
=
\mathbb E[1-|V-V'|].
\]
Hence,
\[
\xi(Y,\mathbf X)
=
3\int_0^1 \PP\bigl(|F_Y(Y)-F_Y(Y')|\le s\bigr)\,ds-2.
\]

\end{proof}
\end{document}